\numberwithin{equation}{section}
\numberwithin{figure}{section}
\theoremstyle{plain}
\newtheorem{thm}{\protect\theoremname}
\theoremstyle{plain}
\newtheorem{lem}[thm]{\protect\lemmaname}
\newtheorem{rmk}[thm]{Remark}
\providecommand{\lemmaname}{Lemma}
\providecommand{\theoremname}{Theorem}
\begin{document}
\title[Neumann problem for a type of $k$-Hessian equations]{The Neumann problem for a type of fully nonlinear complex equations}
\author{Weisong Dong}
\address{School of Mathematics, Tianjin University, Tianjin, P.R.China, 300354}
\email{dr.dong@tju.edu.cn}
\author{Wei Wei}
\address{Shanghai Center for Mathematical Sciences, Fudan University, Shanghai,
P.R.China.}
\email{wei\_wei@fudan.edu.cn}

%\date{}
\begin{abstract}
In this paper we study the Neumann problem for a type of fully nonlinear
second order elliptic partial differential
equations on domains in $\mathbb{C}^{n}$ without any curvature
assumptions on the domain.
\end{abstract}

\subjclass[2010]{35J60, 35B45}

\keywords{Neumann problem; \emph{A priori} estimate; Complex Hessian equations;
Elliptic.}
\maketitle
\section{Introduction}

%For  $\lambda = \left(\lambda_{1}, \cdots, \lambda_{n}\right) \in \mathbb{R}^{n}$,
%the $k$-th elementary symmetric function is defined by
%$\sigma_{k}(\lambda)=\sum \lambda_{i_{1}} \lambda_{i_{2}} \cdots \lambda_{i_{k}}$,
%where the sum is over $\{1\leq i_{1}<\cdots<i_{k}\leq n\}$.
%For any $n\times n$ Hermitian
%matrix $A$, define $\sigma_k(A) = \sigma_k (\lambda (A))$,

Let $\Omega\subset\mathbb{C}^{n}$ be a smooth domain and
$\nu$ denote the unit outer
normal vector on $\partial\Omega$.
For a smooth function $u : \Omega \rightarrow \mathbb{R}$,
we denote the complex Hessian matrix of the function $u$ by $\partial\bar{\partial}u$.
Given
$\psi$
a smooth nonnegative function on $\bar{\Omega}\times\mathbb{R}$
and
$\varphi$ a smooth function on
$\partial\Omega\times\mathbb{R}$,
we study the following fully nonlinear second order partial differential equation
\begin{equation}
\label{eqn-1}
\sigma_{k}(\Delta u \texttt{I} -\partial\bar{\partial}u)=\psi(z,u),\; \mbox{in}\; \Omega, 
\end{equation}
with Neumann boundary data
\begin{equation}
u_{\nu} =\varphi(z,u), \;\mbox{on}\; \partial\Omega, \label{eqn-2}
\end{equation}
where $\Delta$ is the Laplace operator, $\texttt{I}$ is the $n\times n$ identity matrix
and $1\leq k\leq n-1$.
%We shall consider equation \eqref{eqn-1}
%satisfying the following Neumann
For any $n\times n$ Hermitian
matrix $A$, the function $\sigma_{k}$ for $1\leq k\leq n$ is defined as
$
\sigma_{k}(A)\equiv \sigma_{k}(\lambda(A))=
\sum %_{1\leq i_{1}<\cdots<i_{k}\leq n}
\lambda_{i_{1}}\lambda_{i_{2}}\cdots\lambda_{i_{k}},
$
where the sum is over $\{1\leq i_{1}<\cdots<i_{k}\leq n\}$ and
$\lambda(A)=(\lambda_{1},\cdots,\lambda_{n})\in\mathbb{R}^{n}$
are the eigenvalues of $A$. For convenience, we denote $\sigma_{0}(\lambda)=1$.
Recall the G{\aa}rding cone
$
\Gamma_{k}=\{\lambda\in\mathbb{R}^{n}:\sigma_{i}(\lambda)>0,\;\forall1\leq i\leq k\}.
$
We say that a function $u\in C^{2}(\Omega)$ is $k$-\emph{admissible} if
$\lambda(\Delta u \texttt{I}-\partial\bar{\partial}u) (z) \in\Gamma_{k}$ for every $z\in \Omega$.
It is well known that
at a diagonal matrix $A = (a_{ij})$, $\frac{\partial \sigma_k}{\partial a_{ij}} = \frac{\partial \sigma_k}{\partial \lambda_i} \delta_{ij}$,
see Caffarelli-Nirenberg-Spruck \cite{CNS}.
Hence, equation \eqref{eqn-1} is elliptic for $k$-admissible
solutions, see Lemma \ref{key} in Section 2.

Fully nonlinear equations similar to \eqref{eqn-1} with the particular Hermitian
matrix $\Delta u \texttt{I}-\partial\bar{\partial}u$ inside the $\sigma_{k}$
have been studied extensively. Such type of equations first comes from
the study of Gauduchon conjecture. In \cite{STW} Sz\'ekelyhidi-Tosatti-Weinkove
solved a class of very general fully nonlinear equations on compact Hermitian manifolds, thereby the
Gauduchon conjecture was proved as a special case of their equation.
Actually, the Hermitian matrix inside the equation in  \cite{STW} also involved a linear term of the
gradient of $u$. Later on Tosatti-Weinkove \cite{TW19}
studied the Monge-Amp\`ere type equation with a general linear
gradient term inside the determinant on compact Hermitian manifolds
of complex dimension $n\geq2$. See also Yuan \cite{Yuan} for the
case of dimension 2. The Dirichlet problem for general fully nonlinear equations as in \cite{STW} has
been investigated by Feng-Ge-Zheng \cite{FGZ} on compact Hermitian manifolds with boundaries. Chu-Jiao studied $\sigma_k$ type of curvature equations  in \cite{CJ} and Chen-Tu-Xiang studied the Hessian quotient type of curvature equations \cite{CTX}.

After the real Monge-Amp\`ere equation was solved by Lions-Trudinger-Urbas in \cite{LTU}, the Neumann problem for complex Monge-Amp\`ere equation on complex domains
was studied by Li \cite{LiSY}. After that, there are
few progresses on the Neumann problem for the complex fully nonlinear equations. Until recently, following the breakthrough
work of Ma-Qiu \cite{MQ}, who solved the Neumann problem for $k$-Hessian
equations on uniformly convex domains in $\mathbb{R}^{n}$, some literatures
have appeared, such as Chen-Wei \cite{CW} for the complex Hessian
quotient equations and Chen-Chen-Mei-Xiang \cite{CCMX2} for a class of
mixed complex Hessian equations. See \cite{CZ} and \cite{CCX1} for
the corresponding problems in the real situations. Comparing to the complex fully nonlinear equations, the works about the real fully nonlinear equations are more abundant.  We refer to \cite{C07, JLL07, Jin, C09,  HS1, HS2, SY1, SY2, JT15, JT16, CMW, CMW2, DB1, DB2, HJ} and references therein. We remark that
in the above mentioned papers, the domain is assumed to be strictly
pseudoconvex, convex or mean convex. Due to the special structure of our equation,
we do not need the curvature assumption on the domain. As far as the authors know,
without any curvature conditions on the domain, it may be the first result
about the existence of a solution to the Hessian type fully nonlinear equations.
Namely, we have
\begin{thm}
\label{thm} Suppose $\Omega\subset\mathbb{C}^{n}$ is a
domain with $C^{4}$ boundary $\partial \Omega$. Let $\phi\in C^{3}(\partial\Omega)$ and $0\leq \psi\in C^{2}(\overline{\Omega})$
be given. Then there exists a unique $k$-admissible solution $u\in C^{3,\alpha}(\overline{\Omega})$
for $0 < \alpha <1$
to the Neumann problem
\begin{equation}
\begin{cases}
\sigma_{k}(\Delta u \texttt{I} -\partial\bar{\partial}u)=\psi(z), & \mbox{\text{in}}\;\Omega,\\
u_\nu = - \beta u + \phi(z), & \mbox{on}\;\partial\Omega,
\end{cases}\label{eqn-3}
\end{equation}
for any $1\leq k\leq n-1$ and $\beta $ a positive constant, where $\nu$ is the unit outer normal vector on $\partial\Omega$.
\end{thm}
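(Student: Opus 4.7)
The proof proceeds by the method of continuity along a standard one-parameter family interpolating between \eqref{eqn-3} and an explicitly solvable problem at $t=0$, reducing the existence to an \emph{a priori} $C^{2,\alpha}(\overline\Omega)$ estimate on $k$-admissible solutions. Given a uniform $C^2$ bound, the complex Evans--Krylov theorem (combined with the standard reflection argument near the oblique boundary) upgrades $C^2$ to $C^{2,\alpha}$, and Schauder theory then yields $C^{3,\alpha}$; openness of the set of admissible parameters follows from the linearized theory for oblique Neumann problems with $\beta>0$, closedness is precisely the estimate, and uniqueness is a consequence of the comparison principle for the concave degenerate-elliptic operator $\sigma_k^{1/k}$ on $\Gamma_k$ with a strictly oblique boundary condition.

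For the zeroth- and first-order estimates the damping term $-\beta u$ does most of the work. An upper bound is immediate from the Neumann condition at a boundary maximum: $0\leq u_\nu = -\beta u + \phi$ forces $u\leq \beta^{-1}\max_{\partial\Omega}\phi$. A lower bound follows by comparison with an explicit $k$-admissible subsolution, for instance of the form $\epsilon(|z|^2-R^2)$. The gradient bound is obtained in two stages: an interior bound via the Ma--Qiu-type maximum principle applied to an auxiliary function of the shape $|\nabla u|^{2}e^{\eta(u)}$, and a boundary bound by adding a multiple of $-\rho$ (with $\rho$ a defining function of $\Omega$); the concavity of $\sigma_k^{1/k}$ absorbs the quadratic error terms and the sign of the linear boundary term $-\beta u_\nu$ kills the bad boundary contributions.

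The main obstacle, and the crucial novelty of the theorem, is the $C^{2}$ boundary estimate \emph{without any curvature assumption on $\partial\Omega$}. This is where the specific structure of $\sigma_k(\Delta u\,\texttt{I}-\partial\bar\partial u)$ is used decisively. If $\lambda_1,\dots,\lambda_n$ are the eigenvalues of $\partial\bar\partial u$, then $\Delta u\,\texttt{I}-\partial\bar\partial u$ has eigenvalues $\mu_i=\sum_{j\neq i}\lambda_j$, so the linearized coefficient matrix $F^{i\bar j}$ carries a doubly favorable trace, namely an extra ``sum-over-other-indices'' contribution on top of the one produced by $k$-admissibility alone. After the standard splitting at $z_0\in\partial\Omega$ into tangential-tangential, tangential-normal, and normal-normal blocks, the $(TT)$ entries are controlled by differentiating the Neumann condition twice along $\partial\Omega$ and using the $C^{1}$ bound; the $(TN)$ entries are estimated by applying the maximum principle to an auxiliary barrier built near $z_0$ from the defining function $\rho$ and a tangential quadratic in $|z-z_0|$, with the unfavorable curvature terms of $\partial\Omega$ that appear in the linearized operator $L[\Phi]$ being absorbed against the favorable trace produced by the structure. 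This is the step where convexity or mean convexity of $\partial\Omega$ becomes dispensable. The $(NN)$ entry is then extracted from the equation itself, using that $\sigma_k$ is bounded below on $\Gamma_k$ once the other entries are controlled. The interior $C^{2}$ estimate is by the standard maximum principle applied to $W=\log\lambda_{\max}(\Delta u\,\texttt{I}-\partial\bar\partial u)+\eta(|\nabla u|^{2})+\chi(u)$, again exploiting concavity of $\sigma_k^{1/k}$.
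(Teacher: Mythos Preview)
Your overall architecture (continuity, Evans--Krylov, Schauder, comparison) matches the paper, and you correctly identify that the special structure of $\Delta u\,\texttt{I}-\partial\bar\partial u$ is what removes the curvature hypothesis. However, the $C^2$ boundary part of your proposal has a genuine gap and diverges from what the paper actually does.

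First, your (TT)/(TN)/(NN) splitting is a Dirichlet-type scheme and is miswired for a Neumann condition. Differentiating $u_\nu=-\beta u+\phi$ once along a tangential direction $\tau$ yields $D_{\tau\nu}u + D_l u\,D_\tau\nu^l = -\beta u_\tau + \phi_\tau$, so it is the \emph{mixed} (TN) entries that are controlled directly by the boundary data and the $C^1$ bound, not the tangential--tangential ones. Differentiating once more gives information about $D_\nu(D_{\tau\tau}u)$, again not $D_{\tau\tau}u$ itself. Hence your claim that ``the (TT) entries are controlled by differentiating the Neumann condition twice'' is false as stated; there is no direct boundary identity for the pure tangential Hessian in a Neumann problem. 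Likewise, your (NN) step ``extracted from the equation, using that $\sigma_k$ is bounded below'' is problematic precisely because the theorem allows $\psi\ge 0$: a positive lower bound on $\sigma_k$ is not available, and the paper stresses that the estimates must be independent of $\inf\psi$.

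Second, the structural property is sharper than ``a doubly favorable trace.'' The paper proves (Lemma~\ref{key}) that for $1\le k\le n-1$ and $\eta\in\Gamma_k$ the \emph{smallest} eigenvalue of the linearization satisfies $f_1\ge c_{n,k}\sum_i f_i$, so the equation is uniformly elliptic with ellipticity ratio bounded by a dimensional constant. This is what allows quadratic terms like $F^{i\bar j}u_{k\bar j}u_{\bar k i}$ to dominate and curvature terms $F^{i\bar j}r_{i\bar j}$ to be absorbed regardless of the sign of the boundary curvature; a mere lower bound on the trace would not suffice.

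The paper's $C^2$ argument runs differently from yours: (i) a global reduction $\sup_{\Omega\times S^{2n-1}}D_{\zeta\zeta}u\le C\big(1+\sup_{\partial\Omega}|D_{\nu\nu}u|\big)$ via a maximum principle for the auxiliary function $\Phi(z,\zeta)=e^{-Ar}\big(D_{\zeta\zeta}u-v(z,\zeta)\big)+|\nabla u|^2$ on $\bar\Omega\times S^{2n-1}$, where the correction $v$ is chosen so that differentiating the Neumann condition handles the non-tangential and tangential boundary cases, and Lemma~\ref{key} forces an interior maximum to be bounded; then (ii) a two-sided bound on $D_{\nu\nu}u|_{\partial\Omega}$ via barriers on the strip $\Omega_\mu$ of the form $\langle Du,Dr\rangle-\varphi \pm M^{-1/2}(\langle Du,Dr\rangle-\varphi)^2 \pm \tfrac12 M r$, again using Lemma~\ref{key} to get the $M^{3/2}F^{i\bar j}r_i r_{\bar j}\ge c\,M^{3/2}\sum F^{i\bar i}$ term that rules out an interior maximum for large $M$ without any sign on the boundary curvature. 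Your proposal would need to be rewritten along these lines.
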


For $k=1$ the equation is just the Poisson equation and we can drop the assumption $\psi \geq 0$.
For $k\geq 2$, the equation is fully nonlinear and nonnegativity of $\psi$ is necessary for the sake of $k$-admissible solutions.
The main purpose of this paper is to establish the a priori estimates for $k$-admissible solutions,
and then the existence of a solution follows by the continuity method
and uniqueness follows by the maximum principle.
The methods here also work for real equations. Deleting the bar of index in the proof of Theorem \ref{thm}, the proof is  same.  So the real counterpart of equation \eqref{eqn-3} is
also solvable.

\begin{thm}
\label{thm-real part} Suppose $\Omega\subset\mathbb{R}^{n}$ is a
domain with $C^{4}$ boundary $\partial \Omega$. Let $\phi\in C^{3}(\partial\Omega)$ and $0\leq \psi\in C^{2}(\overline{\Omega})$
be given. Then there exists a unique $k$-admissible solution $u\in C^{3,\alpha}(\overline{\Omega})$
for $0 < \alpha <1$
to the Neumann problem
\begin{equation}
\begin{cases}
\sigma_{k}(\Delta u \texttt{I} -D^2u)=\psi(z), & \mbox{\text{in}}\;\Omega,\\
u_\nu = - \beta u + \phi(z), & \mbox{on}\;\partial\Omega,
\end{cases}\label{eqn-3}
\end{equation}
for any $1\leq k\leq n-1$ and $\beta $ a positive constant, where $\nu$ is the unit outer normal vector on $\partial\Omega$.
\end{thm}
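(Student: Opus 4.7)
The plan is to prove Theorem~\ref{thm-real part} via the continuity method, for which the essential task is a priori $C^{3,\alpha}(\bar{\Omega})$ bounds on every $k$-admissible solution of \eqref{eqn-3}. I would interpolate between the target data and a known $k$-admissible pair (for instance a rescaled $\underline{u} = |x|^2$ with the matching right-hand side) while keeping the Robin condition $u_\nu + \beta u = \phi$ fixed throughout. Openness follows from the implicit function theorem applied to the concave branch $F = \sigma_k^{1/k}$: the linearisation $L = F^{ij}\partial_{ij}$ is uniformly elliptic on $k$-admissible functions and the boundary operator is strictly regular oblique since $\beta > 0$, so Lieberman's linear theory gives invertibility in $C^{2,\alpha}(\bar{\Omega})$. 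Uniqueness is a direct consequence of the comparison principle for concave elliptic operators with Robin data. Closedness then reduces, via Evans--Krylov together with Krylov's boundary estimate for concave oblique problems, to a uniform $C^2(\bar{\Omega})$ bound.

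For the $C^0$ estimate, $k$-admissibility forces $(n-1)\Delta u = \sigma_1(\lambda(\Delta u\, I - D^2 u)) > 0$, so $u$ is strictly subharmonic; its maximum is therefore attained on $\partial\Omega$, and Hopf combined with the boundary condition yields $u \leq \beta^{-1}\max\phi$. The lower bound comes from a linear barrier of the form $\beta^{-1}\min\phi - C\rho$, where $\rho$ is a defining function of $\Omega$ and the constant $C$ is chosen using $\psi \geq 0$ and the Robin condition. For the $C^1$ estimate, $u_\nu|_{\partial\Omega}$ is already controlled by the $C^0$ bound and the boundary condition, so only the tangential gradient on $\partial\Omega$ and the global gradient remain. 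Following the Ma--Qiu scheme \cite{MQ}, adapted to the complex setting in Chen--Wei \cite{CW}, I would apply the maximum principle to an auxiliary function of the form $W = |Du|^2 e^{\alpha u} + \gamma(u_\nu + \beta u - \phi)$ extended into a neighbourhood of $\partial\Omega$; every bad boundary term produced at a maximum of $W$ is absorbed by the Robin condition, and no curvature of $\partial\Omega$ enters because the boundary operator is linear with constant-sign coefficients.

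The main obstacle is the boundary $C^2$ estimate. Interior bounds follow from the classical test function $\max_\xi u_{\xi\xi} + \phi_1(|Du|^2) + \phi_2(u)$ combined with the concavity of $\sigma_k^{1/k}$. On $\partial\Omega$ the pure tangential-tangential second derivatives are obtained by twice-differentiating the Robin condition along tangential directions, hence depend only on already-controlled lower-order data. The mixed tangential-normal derivatives $u_{\tau\nu}$ require a barrier: I would take
\[
\Phi = A\bigl(u + \beta^{-1}\phi - \beta^{-1}\max\phi\bigr) + B|x - x_0|^2 - \rho,
\]
apply $L$ and the Robin boundary operator to $\pm\partial_\tau u \mp \beta^{-1}\partial_\tau\phi$, and choose $A, B$ large so that $\Phi$ is a super-barrier. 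The key structural input is the identity $\lambda_i(\Delta u\, I - D^2 u) = \sum_{j \neq i}\mu_j(D^2 u)$, which produces a uniform lower bound on $F^{ij}\nu_i\nu_j$ regardless of any curvature of $\partial\Omega$; this is precisely the feature that allows the convexity hypothesis to be dropped. Finally $u_{\nu\nu}$ is read off from $\sigma_k = \psi$: once the tangential-tangential block of $\Delta u\, I - D^2 u$ lies in the interior of $\Gamma_{k-1}$ on $\mathbb{R}^{n-1}$, the coefficient of $u_{\nu\nu}$ in the expansion of $\sigma_k$ is uniformly positive and a two-sided bound on $u_{\nu\nu}$ follows. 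With $|u|_{C^2(\bar{\Omega})}$ controlled, Evans--Krylov together with Krylov's boundary estimate upgrades this to $C^{2,\alpha}(\bar{\Omega})$, and differentiating the equation gives $C^{3,\alpha}(\bar{\Omega})$, closing the continuity argument.
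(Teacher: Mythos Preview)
Your overall scheme (continuity method, $C^0$ from subharmonicity plus Hopf, gradient via a Ma--Qiu type auxiliary function, then Evans--Krylov) matches the paper's, and your identification of the structural identity $\lambda_i(\Delta u\,I - D^2u)=\sum_{j\neq i}\mu_j(D^2u)$ as the mechanism that removes the curvature hypothesis is exactly right---this is the content of the paper's key Lemma~\ref{key}.

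The gap is in your boundary $C^2$ estimate. Twice tangentially differentiating the Robin condition $u_\nu=-\beta u+\phi$ does \emph{not} bound $u_{\tau\tau}$: it produces
\[
D_\tau D_\tau(u_\nu)=-\beta\,u_{\tau\tau}+\phi_{\tau\tau},
\]
which, after expanding the left side, is a relation between third-order quantities $u_{\tau\tau\nu}$ (plus curvature terms) and $u_{\tau\tau}$, not a bound on $u_{\tau\tau}$ in terms of lower-order data. (A single tangential differentiation does control $u_{\tau\nu}$, as you implicitly use later, but the second differentiation gains an order rather than closing one.) Consequently your final step---reading $u_{\nu\nu}$ off the equation once the tangential block of $\Delta u\,I - D^2u$ is known to sit in $\Gamma_{k-1}$---is circular: you do not yet control that tangential block. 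This is the Dirichlet strategy, where $u|_{\partial\Omega}$ is prescribed and $u_{\tau\tau}$ is genuinely lower-order; for Neumann data the tangential and normal second derivatives are coupled through the equation and cannot be separated this way.

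The paper (following Lions--Trudinger--Urbas and Ma--Qiu) resolves this by reversing the order: first prove a global reduction
\[
\sup_{\Omega\times S^{n-1}} D_{\xi\xi}u \le C\bigl(1+\sup_{\partial\Omega}|D_{\nu\nu}u|\bigr)
\]
via a maximum-principle argument for an auxiliary function of the form $h(r)\bigl(D_{\zeta\zeta}u - v(z,\zeta)\bigr)+|\nabla u|^2$ on $\bar\Omega\times S^{n-1}$, where the correction $v$ is chosen so that the boundary case collapses to $D_{\nu\nu}u$ and Lemma~\ref{key} supplies the needed ellipticity in the interior case. Only \emph{after} this reduction is $|D_{\nu\nu}u|$ bounded on $\partial\Omega$ by a separate barrier argument on a boundary strip (Theorem~\ref{thm:double normal}), again using Lemma~\ref{key} to make the barrier work without convexity. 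Your proposal needs to adopt this two-step structure.
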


Using Theorem \ref{thm} and the approximation argument in Lions-Trudinger-Urbas \cite{LTU} (or
Qiu-Xia \cite{QX} ), with the additional assumption that $\Omega$ is strictly convex, one can easily show the following
theorem as Chen-Wei \cite{CW} whose proof is omitted here.
\begin{thm}
\label{thm2} Suppose $\Omega\subset\mathbb{C}^{n}$ is a $C^{4}$
strictly convex domain. Let $\phi\in C^{3}(\partial\Omega)$ and $0 \leq \psi \in C^{2}(\overline{\Omega})$
be given. Then there exist a unique constant $c$ and a unique $k$-admissible
solution $u\in C^{3,\alpha}(\overline{\Omega})$ (up to a constant) for $0<\alpha <1$ to the Neumann problem
\begin{equation}
\begin{cases}
\sigma_{k}(\Delta uI-\partial\bar{\partial}u)=\psi(z), & \mbox{in}\;\Omega,\\
u_\nu = c+\phi(z), & \mbox{on}\;\partial\Omega,
\end{cases}\label{eqn-4}
\end{equation}
for any $1\leq k\leq n-1$, where $\nu$ is the unit
outer normal vector on $\partial\Omega$.
\end{thm}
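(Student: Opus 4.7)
The plan is to obtain Theorem~\ref{thm2} from Theorem~\ref{thm} by letting $\beta\to 0^{+}$. For each $\beta\in(0,1]$ Theorem~\ref{thm} furnishes a unique $k$-admissible solution $u_{\beta}\in C^{3,\alpha}(\overline{\Omega})$ of
\[
\sigma_{k}\bigl(\Delta u_{\beta}\,\texttt{I}-\partial\bar{\partial}u_{\beta}\bigr)=\psi\ \text{in}\ \Omega,\qquad (u_{\beta})_{\nu}+\beta u_{\beta}=\phi\ \text{on}\ \partial\Omega.
\]
Fix $z_{0}\in\overline{\Omega}$ and set $v_{\beta}:=u_{\beta}-u_{\beta}(z_{0})$. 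If $\beta\,|u_{\beta}(z_{0})|$ and $\|v_{\beta}\|_{C^{3,\alpha}(\overline{\Omega})}$ can be bounded uniformly in $\beta\in(0,1]$, then Arzel\`a--Ascoli produces a sequence $\beta_{j}\to 0^{+}$ along which $\beta_{j}u_{\beta_{j}}(z_{0})\to -c$ for some $c\in\mathbb{R}$ and $v_{\beta_{j}}\to v$ in $C^{3,\alpha'}(\overline{\Omega})$ for every $\alpha'<\alpha$. Because $\beta_{j}v_{\beta_{j}}\to 0$ uniformly, the boundary condition $(u_{\beta_{j}})_{\nu}=-\beta_{j}u_{\beta_{j}}+\phi$ passes to $v_{\nu}=c+\phi$ in the limit, and the PDE is preserved by $C^{2}$-convergence; thus $(v,c)$ solves~\eqref{eqn-4}, establishing existence.

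\textbf{Uniform estimates.} The delicate point is the $C^{0}$ bound on $v_{\beta}$, since~\eqref{eqn-4} carries no zeroth-order term in the boundary data. Strict convexity of $\Omega$ enters here: one chooses a strictly plurisubharmonic defining function $\rho\in C^{2}(\overline{\Omega})$ (e.g.\ a strictly convex defining function, available by hypothesis) with $\rho<0$ in $\Omega$, $\rho=0$ and $\rho_{\nu}>0$ on $\partial\Omega$, and compares $v_{\beta}$ with $A\rho+B$ for suitable constants using the Neumann condition and the Hopf lemma, exactly as in Lions--Trudinger--Urbas \cite{LTU}, Qiu--Xia \cite{QX}, and Chen--Wei \cite{CW}. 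This yields $\|v_{\beta}\|_{C^{0}}\le C$ independently of $\beta$. Evaluating the boundary condition at the extrema of $u_{\beta}$ and combining with this oscillation bound controls $\beta\,|u_{\beta}(z_{0})|$ as well. With the uniform $C^{0}$ bound in hand, the gradient, full second-order, and $C^{2,\alpha}$ estimates developed in the proof of Theorem~\ref{thm} apply with $\beta$-independent constants, because the boundary term $-\beta u_{\beta}$ becomes an $L^{\infty}$-bounded perturbation of $\phi$. A final Schauder bootstrap on the linearized equation upgrades this to a uniform $C^{3,\alpha}$ bound.

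\textbf{Uniqueness and main obstacle.} For uniqueness, if $(u_{i},c_{i})$, $i=1,2$, both solve~\eqref{eqn-4}, then $w=u_{1}-u_{2}$ satisfies a linear uniformly elliptic equation $a^{i\bar{j}}w_{i\bar{j}}=0$ (the coefficient matrix coming from concavity of $\sigma_{k}^{1/k}$ along the segment between $u_{1}$ and $u_{2}$) with $w_{\nu}=c_{1}-c_{2}$ on $\partial\Omega$; the strong maximum principle together with the Hopf boundary point lemma then forces $w\equiv\mathrm{const}$ and $c_{1}=c_{2}$. The genuine obstacle throughout is the uniform oscillation estimate as $\beta\to 0^{+}$: nothing in~\eqref{eqn-4} pins $u$ pointwise, and absent strict convexity one cannot produce the barrier $\rho$ that prevents $v_{\beta}$ from blowing up. Once this barrier is available, the remainder of the argument is routine and essentially parallels Chen--Wei \cite{CW}, which is why the authors omit the proof.
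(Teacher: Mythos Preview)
Your proposal follows the same approximation scheme the paper uses: solve the $\beta$-regularized problem via Theorem~\ref{thm}, derive $\beta$-independent estimates, normalize by subtracting a constant, extract a convergent subsequence, and prove uniqueness by the Hopf lemma. The normalization $v_\beta=u_\beta-u_\beta(z_0)$ versus the paper's $v_\varepsilon=u_\varepsilon-\fint_\Omega u_\varepsilon$ is cosmetic.

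The one substantive discrepancy is \emph{where} you place the strict convexity. You claim it enters through a direct $C^{0}$ oscillation barrier (comparison with $A\rho+B$), after which the gradient estimate of Section~3 applies to $v_\beta$ with $\beta$-independent constants. The paper, by contrast, states explicitly (see the remark after Theorem~\ref{thm2} and the proof sketch in Section~5) that convexity is used to obtain a \emph{uniform-in-$\varepsilon$ gradient estimate} for $u_\varepsilon$, following Chen--Wei~\cite{CW}; the oscillation bound on $v_\varepsilon$ then comes for free from $|\nabla u_\varepsilon|\le C$. This matters because the paper's own gradient estimate (Theorem~\ref{global gradient}) depends on $|u|_{C^0}$, which may blow up as $\beta\to 0$, so it cannot be invoked for $u_\beta$ until \emph{some} $\beta$-independent control is already in hand. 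Your ordering is internally consistent provided the barrier step can actually be carried out, but the references you cite (LTU, QX, CW) in fact implement the convexity input at the $C^1$ level rather than via a direct $C^0$ comparison, so ``exactly as in'' is not quite accurate. Either route leads to the same conclusion; the paper simply identifies the other one.
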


\begin{rmk}
The strict convexity here is used to get the uniform gradient estimate and overcome the possible blowing up
of the $C^0$ estimate as $\varepsilon \rightarrow 0$ for the solution $u_\varepsilon$ to equation \eqref{eqn-1} with
the boundary data $u_\nu = -\varepsilon u+\phi(z)$ on $\partial \Omega$.

%\textcolor[rgb]{1,0,0}{For the approximated equation, we can only have $C^0$ estimate as $|\varepsilon u|\le C$ and $C^1$ estimates depend on %$C^0$ estimate. To overcome it, we always need to  add the convexity condition of the boundary. }
%as $\varepsilon \rightarrow 0$, where $u_\varepsilon$ satisfies
%\begin{equation}
%\begin{cases}
%\sigma_{k}(\Delta u \texttt{I} - \partial\bar{\partial}u)=\psi(x), & \mbox{in}\;\Omega\\
%u_\nu = -\varepsilon u+\phi(x), & \mbox{on}\;\partial\Omega.
%\end{cases}
%\end{equation}

\end{rmk}

We remark here that in the above two theorems the function $\psi$ can be zero at some (or all) points in $\Omega$.
This is due to our key Lemma \ref{key} in Section 2, which is equivalent to say that the equation is strictly elliptic
for $k$-admissible solutions.
Hence, all the a priori estimates derived here are independent of the lower bound of $\psi$,
which enable us to prove the existence result even for $\psi \geq 0$.
The key lemma still holds for $k+1$-admissible solutions to the quotient type equations,
i.e. $\frac{\sigma_k}{\sigma_l}$ type where $1\leq l < k \leq n-1$.
See Lemma \ref{key-2}.
All the a priori estimates for quotient type equations can be established similarly as for the equation \eqref{eqn-3}
using Lemma \ref{key-2} instead of Lemma \ref{key}.
Suppose $\Omega$, $\phi$ and $\psi$ are as before.
Therefore, we have

\begin{thm}\label{Hessian quotient estimates}
For any $1 \leq l < k\leq n-1$ and $\beta $ a positive constant,
suppose $u\in C^{4}(\overline{\Omega})$ is a $k+1$-admissible solution
to the Neumann problem
\begin{equation}
\begin{cases}
\frac{\sigma_{k}(\Delta u \texttt{I} -\partial\bar{\partial}u) }{\sigma_l (\Delta u \texttt{I} -\partial\bar{\partial}u)} =\psi(z), & \mbox{in}\;\Omega,\\
u_\nu = - \beta u + \phi(z), & \mbox{on}\;\partial\Omega,
\end{cases}\label{eqn-6}
\end{equation}
where $\nu$ is the unit outer normal vector on $\partial\Omega$. Then, we have the following a priori estimates
\[
 |u|_{C^2 (\Omega)} \leq C,
\]
where $C$ depends on $\psi, \phi, \Omega, n,\beta$ and $ k.$
\end{thm}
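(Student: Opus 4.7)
The plan is to mirror the argument outlined for Theorem \ref{thm} step by step, substituting Lemma \ref{key-2} wherever Lemma \ref{key} was invoked to obtain strict ellipticity of the linearized operator. I would establish the $C^{0}$, $C^{1}$, and $C^{2}$ bounds in sequence; the inequality $|u|_{C^{2}(\Omega)}\leq C$ then follows directly from the last of these.

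For the $C^{0}$ bound I would observe that $(k{+}1)$-admissibility forces $(n-1)\Delta u=\mathrm{tr}(\Delta u\,\texttt{I}-\partial\bar\partial u)>0$ in $\Omega$, so $u$ is strictly subharmonic and its extrema lie on $\partial\Omega$. Evaluating the oblique boundary condition at a maximum $p$ gives $0\leq u_{\nu}(p)=-\beta u(p)+\phi(p)$, hence $\beta u(p)\leq\phi(p)$, and similarly at a minimum, so $|u|_{C^{0}(\overline\Omega)}\leq\beta^{-1}|\phi|_{C^{0}(\partial\Omega)}$. For the gradient estimate, I would apply the linearized operator $\mathcal{L}=(\mathrm{tr}\,F)\Delta-F^{p\bar q}\partial_{p}\partial_{\bar q}$ to an auxiliary function of the form $|\nabla u|^{2}e^{\alpha u}$: the concavity of $(\sigma_{k}/\sigma_{l})^{1/(k-l)}$ together with the uniform positive lower bound on $\mathrm{tr}\,F$ from Lemma \ref{key-2} produce the good third-order terms needed to close the estimate at an interior maximum, while boundary maxima are handled through the Hopf lemma and the positivity of $\beta$.

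The core of the proof is the $C^{2}$ estimate. For the interior part, I would apply $\mathcal{L}$ to an auxiliary function of the form $\log\lambda_{\max}(-\partial\bar\partial u)+\eta(u,|\nabla u|^{2})$; Lemma \ref{key-2} is again what allows the quadratic gradient terms produced by the quotient structure to be absorbed. For the boundary part, I would work in an adapted frame with $e_{n}$ the unit complex normal at $p\in\partial\Omega$ and split into three cases. Double-tangential derivatives follow immediately from differentiating $u_{\nu}=-\beta u+\phi$ twice tangentially and using the $C^{1}$ bound. Tangential-normal derivatives $u_{\alpha\bar n}$ are controlled by a barrier on a collar $\{d(\cdot,\partial\Omega)<\delta\}$: I would take $\Theta=\pm\bigl(u_{\alpha}-(-\beta u+\phi)_{\alpha}\bigr)+A\,d-B\,d^{2}$ for large $A,B$, verify $\mathcal{L}\Theta\leq0$, and apply the Hopf lemma. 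The double-normal derivative $u_{n\bar n}(p)$ is then read off from the equation, using Lemma \ref{key-2} to guarantee a uniform positive lower bound on the coefficient $F^{n\bar n}$.

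The main obstacle I anticipate is the tangential-normal step, where classical treatments invoke strict pseudoconvexity of $\partial\Omega$ to dominate the second-fundamental-form terms that appear in $\mathcal{L}\Theta$. Here that geometric hypothesis is absent, and its role must be played entirely by the quantitative ellipticity furnished by Lemma \ref{key-2}; the constants $A$ and $B$ in the barrier will therefore have to be calibrated so that the uniform lower bound on $\mathrm{tr}\,F$, rather than any curvature of $\partial\Omega$, is what absorbs the error. Once this step is in place, the remaining pieces are routine adaptations of the corresponding steps for Theorem \ref{thm} with $\sigma_{k}$ replaced by $\sigma_{k}/\sigma_{l}$.
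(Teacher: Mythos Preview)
Your $C^{0}$ and $C^{1}$ outlines are fine and match the paper's arguments (Theorem \ref{thm:interior gradient} and Theorem \ref{global gradient} are stated and proved simultaneously for \eqref{eqn-3} and \eqref{eqn-6}, using Lemma \ref{key-2} in place of Lemma \ref{key}).

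The $C^{2}$ part, however, is where your proposal diverges from the paper and where it contains a genuine gap: the boundary decomposition you describe is the \emph{Dirichlet} template, not the Neumann one, and two of the three steps are inverted. Differentiating the Neumann condition $u_{\nu}=-\beta u+\phi$ tangentially does \emph{not} yield double-tangential derivatives; it yields the mixed tangential--normal derivatives (up to first-order curvature terms), so no barrier is needed for $u_{\tau\nu}$. Conversely, the double-normal derivative cannot be ``read off from the equation via a lower bound on $F^{n\bar n}$'': that device works for Dirichlet data because a subsolution supplies one inequality and the equation the other, but here there is no subsolution, and in fact the double-normal bound is precisely the step that requires a barrier. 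What remains genuinely uncontrolled by the boundary condition are the purely tangential second derivatives (including derivatives in the $J\nu$ direction, which is real-tangential but complex-normal), and these do not fall out of any differentiation of $u_\nu=\varphi$.

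The paper handles this differently and avoids the three-case split altogether. Theorem \ref{thm:second derivative boundary reduction} works with a global auxiliary function $\Phi(z,\zeta)=h(r)\bigl(D_{\zeta\zeta}u-v(z,\zeta)\bigr)+|\nabla u|^{2}$ on $\overline\Omega\times S^{2n-1}$ and reduces $\sup_{\Omega}D_{\zeta\zeta}u$ directly to $\sup_{\partial\Omega}|D_{\nu\nu}u|$; Lemma \ref{key-2} enters to produce the good quadratic term $c_{0}\sum F^{i\bar i}(D_{\zeta\zeta}u)^{2}$ at an interior maximum. Then Theorem \ref{thm:double normal} bounds $\sup_{\partial\Omega}|D_{\nu\nu}u|$ by a Wang-type barrier $\langle Du,Dr\rangle-\varphi \pm M^{-1/2}(\langle Du,Dr\rangle-\varphi)^{2}\pm\tfrac12 Mr$ on a boundary strip; Lemma \ref{key-2} is again what makes $F^{i\bar j}r_{i}r_{\bar j}\geq c_{n,k}\sum F^{i\bar i}$ and lets the $M^{3/2}$ term dominate without any curvature assumption on $\partial\Omega$. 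So the obstacle you anticipate (absorbing second-fundamental-form terms in a tangential--normal barrier) never actually arises in the paper's route; the uniform ellipticity is used instead at the two places just described.
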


\begin{rmk}
The reason here we don't have existence of a solution immediately from the a priori estimate is that
we don't know whether the solutions are always in $\Gamma_{k+1}$ cone when one applies the continuity method.
%Apparently, similar result as in Theorem 2 holds for equation \eqref{eqn-4} as long as $\Omega$ is \textcolor[rgb]{1.00,0.00,1.00}{strictly %convex}.
\end{rmk}

We give a brief outline of the proof of Theorem \ref{thm}. We prove it
by the standard continuity method. So we shall derive a priori estimates for $k$-admissible
solutions up to second order derivatives, from which we can obtain
$C^{2,\alpha}$ estimates by Evans-Krylov Theorem and more higher
estimates by Schauder theory. The $C^0$ estimate for $k$-admissible
solutions can be derived easily by the maximum principle, see Lions-Trudinger-Urbas
\cite{LTU}. The gradient estimate is proved combining the interior
estimate and near boundary estimate, i.e. estimate on $\Omega_{\mu}=\{z\in\Omega|d(z,\partial\Omega)<\mu\}$
for sufficiently small $\mu>0$. The second order estimate is the
most difficult part. We first reduce the global second order estimate
to the estimate of the double normal derivative on the boundary. Then, we construct
barrier functions on the boundary strip $\Omega_{\mu}$ to derive
upper and lower bounds for the double normal derivative on $\partial\Omega$.

The rest of this paper is organized as follows. In Section 2, we recall
some properties of the elementary symmetric function $\sigma_{k}$
and prove two key lemmas, Lemma \ref{key} and Lemma \ref{key-2}. In Section 3, we prove the gradient estimates,
and the second order estimates are derived in Section 4.
In Section 5, we prove the existence of a solution.

\textbf{Acknowledgements}: The first named author is supported by
the National Natural Science Foundation of China, No.11801405 and
No.62073236. The second named author is supported by BoXin Programme,
BX20190082. Both authors would like to thank Prof. ChuanQiang Chen for careful reading and helpful suggestions.

\section{Preliminary}

We use $\sigma_{k}(\lambda|i)$ to denote the $k$-th elementary symmetric
function with $\lambda_{i}=0$ and $\sigma_{k}(\lambda|ij)$ the $k$-th
elementary function with $\lambda_{i}=\lambda_{j}=0$. For some useful
and well known properties of the elementary symmetric function , see
Li \cite{YYLi}, Lin-Trudinger \cite{LT}, Hou-Ma-Wu \cite{H-M-W}
and Huisken-Sinestrari \cite{HS}. %At a diagonal matrix ${g^i}_j$, we have the formula (see Ball \cite{B})
%\[
%\frac{\partial \sigma_k}{\partial {g^i}_j} = \delta_{ij} \frac{\partial \sigma_k}{\partial \lambda_i}
%= \delta_{ij} \sigma_{k-1} (\lambda | i).
%\]
%\[
%(\lambda|i) = (\lambda_1, \ldots, \hat \lambda_i, \ldots, \lambda_n) \in \mathbb{R}^{n-1}
%\]
%and
%\[(\lambda|ij) = (\lambda_1, \ldots, \hat\lambda_i, \ldots, \hat\lambda_j, \ldots, \lambda_n)\in \mathbb{R}^{n-2},
%\]
%where we use the symbol $``\ \hat{} \  "$ to mean $``$omit".
%We list here some properties of the $k$-th elementary symmetric function.
%\begin{lem}
%\label{sigmak}
%For $\lambda = (\lambda_1, \cdots, \lambda_n) \in \mathbb{R}^n$ and $k = 1, \cdots, n$, we have
%\begin{itemize}
%\item[(1)] $ \sigma_k (\lambda) = \sigma_k(\lambda | i) + \lambda_i \sigma_{k-1} (\lambda | i),\; \forall \; 1 \leq i \leq n;$
%\item[(2)] $ \sum_{i=1}^n \sigma_k (\lambda | i) = (n-k) \sigma_k(\lambda);$
%\item[(3)] For $\lambda\in \Gamma_k $ and $\lambda_1 \geq \cdots \geq \lambda_n$, we have $\lambda_1 \sigma_{k-1} (\lambda | 1) \geq \frac{k}{n} \sigma_k(\lambda);$
%\item[(4)] If $\lambda \in \Gamma_{k}$, we have $\lambda | j \in \Gamma_{k-1}$ for all $1\leq j \leq n$;
%\item[(5)] If $\lambda_i \geq \lambda_j$, we have $\sigma_{k-1} (\lambda |i ) \leq \sigma_{k-1} (\lambda | j)$;
%\item[(6)] For $\lambda\in \Gamma_k $ and $\lambda_1 \geq \cdots \geq \lambda_n$,
 %        we have $\sigma_k (\lambda) \leq C \lambda_1 \cdots \lambda_k $.
%\end{itemize}
%\end{lem}
%\begin{proof}
%For (1) (2), it is trivial. For (3) see \cite{H-M-W}. For (4) see \cite{HS}. For (5) and (6), see \cite{YYLi}.
%\end{proof}
Recall the following result, whose proof can be found in \cite{LT}.
\begin{lem}
\label{LT}
Let $\eta = (\eta_1, \cdots, \eta_n) \in\Gamma_{k}$ and $1\leq k\leq n$. Suppose that
\[
\eta_{1}\geq \eta_2 \geq \cdots\geq\eta_{n}.
\]
Then, we have
\[
\sigma_{k-1}(\eta|k)\geq c_{n,k}\sum_i \sigma_{k-1}(\eta|i),
\]
where $c_{n,k}$ is a positive constant only depending on $n$ and
$k$.
\end{lem}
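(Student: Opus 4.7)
The plan is to reduce the inequality to two independent facts: the monotonicity of the map $i \mapsto \sigma_{k-1}(\eta|i)$ under the sort $\eta_1\geq\cdots\geq\eta_n$, and a quantitative comparison $\sigma_{k-1}(\eta|n)\leq C_{n,k}\,\sigma_{k-1}(\eta|k)$ between the largest term (at index $n$) and the middle term (at index $k$). Once both are in hand, one concludes immediately from
\[
\sum_{i=1}^n \sigma_{k-1}(\eta|i) \;\leq\; n\,\sigma_{k-1}(\eta|n) \;\leq\; n\,C_{n,k}\,\sigma_{k-1}(\eta|k),
\]
which yields the bound with $c_{n,k} = 1/(nC_{n,k})$.

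For the monotonicity step, the tool is the elementary identity
\[
\sigma_{k-1}(\eta|i) - \sigma_{k-1}(\eta|j) = (\eta_j - \eta_i)\,\sigma_{k-2}(\eta|i,j),
\]
coupled with the well-known $\Gamma$-cone fact that $\eta\in\Gamma_k$ implies $\sigma_{k-2}(\eta|i,j)\geq 0$ for every pair $i,j$ (which is itself derivable by fixing two variables and viewing $\sigma_k$ as a polynomial in the remaining ones). Since $\eta_i\geq\eta_j$ whenever $i\leq j$, the displayed identity forces the sequence $\sigma_{k-1}(\eta|i)$ to be non-decreasing in $i$; in particular $\sigma_{k-1}(\eta|i)\leq \sigma_{k-1}(\eta|k)$ for every $i\leq k$, so only the indices $i>k$ remain to be controlled.

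The main obstacle is the comparison step $\sigma_{k-1}(\eta|n)\leq C_{n,k}\,\sigma_{k-1}(\eta|k)$. The idea is to observe that the restricted $(n-1)$-vector $(\eta_1,\ldots,\widehat{\eta_k},\ldots,\eta_n)$ lies in the $\Gamma_{k-1}$ cone of $\mathbb{R}^{n-1}$, because $\sigma_j(\eta|k)>0$ for $1\leq j\leq k-1$ whenever $\eta\in\Gamma_k$. The Newton--MacLaurin inequalities applied to this restricted vector give a lower bound of $\sigma_{k-1}(\eta|k)$ in terms of the lower $\sigma_j(\eta|k)$. Iterating the identity from the monotonicity step with $j=k,k+1,\ldots,n$ expresses $\sigma_{k-1}(\eta|n)$ as $\sigma_{k-1}(\eta|k)$ plus a telescoping sum of correction terms of the form $(\eta_k-\eta_l)\,\sigma_{k-2}(\eta|k,l)$, each of which can then be absorbed into a multiple of $\sigma_{k-1}(\eta|k)$ by Newton--MacLaurin. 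The delicate point is that $\eta_k$ may be of either sign, so one must split into the cases $\eta_k\leq 0$ (where $\sigma_{k-1}(\eta|k)\geq \sigma_{k-1}(\eta)$ trivially) and $\eta_k>0$ (where the Newton--MacLaurin comparison does the work); this is precisely where the dependence of the constant on $n$ and $k$ alone is forced.
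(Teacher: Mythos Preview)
The paper does not supply its own proof of this lemma; it is quoted from Lin--Trudinger \cite{LT}. So there is no in-paper argument to compare against, and I will evaluate your sketch on its own merits.

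Your overall reduction (monotonicity of $i\mapsto\sigma_{k-1}(\eta|i)$, followed by a single ratio bound $\sigma_{k-1}(\eta|n)\le C_{n,k}\,\sigma_{k-1}(\eta|k)$) is a sound plan, and the monotonicity step via the difference identity $\sigma_{k-1}(\eta|i)-\sigma_{k-1}(\eta|j)=(\eta_j-\eta_i)\sigma_{k-2}(\eta|i,j)$ is correct. The case $\eta_k\le 0$ is also fine: there $\sigma_{k-1}(\eta|k)\ge\sigma_{k-1}(\eta)=\tfrac{1}{n-k+1}\sum_i\sigma_{k-1}(\eta|i)$ directly.

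The genuine gap is in the case $\eta_k>0$. You assert that ``the Newton--MacLaurin inequalities applied to [the restricted vector] give a lower bound of $\sigma_{k-1}(\eta|k)$ in terms of the lower $\sigma_j(\eta|k)$,'' and that this absorbs the correction term. But Newton--MacLaurin points the other way: for $(\eta|k)\in\Gamma_{k-1}$ one has
\[
\Big(\sigma_{k-1}(\eta|k)\big/C_{n-1}^{k-1}\Big)^{1/(k-1)}\;\le\;\Big(\sigma_j(\eta|k)\big/C_{n-1}^{j}\Big)^{1/j},\qquad 0\le j<k-1,
\]
which is an \emph{upper} bound on $\sigma_{k-1}(\eta|k)$ in terms of the lower $\sigma_j$, not a lower bound. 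Consequently nothing in your sketch actually controls $\eta_k\,\sigma_{k-2}(\eta|k)$ --- equivalently $(\eta_k-\eta_n)\sigma_{k-2}(\eta|k,n)$ --- by a constant multiple of $\sigma_{k-1}(\eta|k)$, and this is precisely where the content of the lemma lies. (A minor side point: applying the difference identity with $i=k$, $j=n$ is a single step, $\sigma_{k-1}(\eta|n)=\sigma_{k-1}(\eta|k)+(\eta_k-\eta_n)\sigma_{k-2}(\eta|k,n)$; there is no telescoping sum, and the terms $(\eta_k-\eta_l)\sigma_{k-2}(\eta|k,l)$ you list do not arise from iteration.) The Lin--Trudinger argument closes this case by a different device --- an inductive product-type comparison rather than Newton--MacLaurin --- and that is the missing ingredient in your proposal.
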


The following generalized Newton-MacLaurin inequality is also used and its proof can be
found in Spruck \cite{Spruck}.

\begin{lem}
\label{NewMac}
For $\lambda \in \Gamma_k$, $n \geq k>l\geq 0$, $n \geq r> s \geq 0$, $k\geq r$ and $l\geq s$, we have
\[
\Big(\frac{\sigma_{k}(\lambda) / C_n^k }{\sigma_{l} (\lambda) / C_n^l }\Big)^{\frac{1}{k-l}}
\leq \Big(\frac{\sigma_{r} (\lambda) / C_n^r }{\sigma_{s} (\lambda) / C_n^s }\Big)^{\frac{1}{r-s}},
\]
and the equality holds if and only if $\lambda_1 = \cdots = \lambda_n$.
\end{lem}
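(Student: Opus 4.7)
My approach is to reduce this generalized Maclaurin inequality to the discrete concavity of the sequence $j \mapsto \log P_j(\lambda)$, where $P_j := \sigma_j/C_n^j$. Since $\lambda \in \Gamma_k$ ensures $P_0, P_1, \ldots, P_k > 0$, the logarithms are well defined on $\{0, 1, \ldots, k\}$, which by the hypotheses $k \geq r > s$ and $k > l \geq s$ contains all of $\{s, l, r, k\}$. After taking logarithms, the inequality to be shown reads
\[
\frac{\log P_k - \log P_l}{k-l} \leq \frac{\log P_r - \log P_s}{r-s},
\]
i.e., the chord slope of $j \mapsto \log P_j$ over $[l, k]$ is at most its chord slope over $[s, r]$.

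The key input I will invoke is the classical Newton inequality: for every $\lambda \in \mathbb{R}^n$ and $1 \leq j \leq n-1$,
\[
P_j(\lambda)^2 \geq P_{j-1}(\lambda)\, P_{j+1}(\lambda),
\]
with equality (when $\lambda \neq 0$) if and only if $\lambda_1 = \cdots = \lambda_n$. Restricted to indices $1 \leq j \leq k-1$, this says precisely that $j \mapsto \log P_j$ is a concave sequence on $\{0, 1, \ldots, k\}$. For any concave sequence the chord slope over $[a, b]$ is non-increasing in each endpoint separately, so using $l \geq s$ and $k \geq r$ I would chain two chord comparisons through the auxiliary interval $[s, k]$:
\[
\frac{\log P_k - \log P_l}{k-l} \leq \frac{\log P_k - \log P_s}{k-s} \leq \frac{\log P_r - \log P_s}{r-s}.
\]
Exponentiating yields the claim. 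The degenerate sub-cases $l = s$ or $k = r$ are handled automatically since the corresponding step then compares identical chords.

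For the equality characterization, assuming $(k, l) \neq (r, s)$ the chain above passes through at least one adjacent triple $(j-1, j, j+1) \subset \{s, \ldots, k\}$, so equality in the claimed inequality forces the corresponding Newton inequality to be saturated, which by Newton's equality case (and the fact that $\lambda \neq 0$ on $\Gamma_k$) forces $\lambda_1 = \cdots = \lambda_n$; the converse is immediate. No step presents a serious obstacle: the substantive content is entirely Newton's inequality, and the only care needed is in recording the correct chord-monotonicity direction and verifying that the degenerate endpoint cases $l = s$ or $k = r$ reduce to trivial equalities of a chord with itself.
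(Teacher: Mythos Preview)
The paper does not prove this lemma at all: it simply states the generalized Newton--MacLaurin inequality and refers the reader to Spruck \cite{Spruck} for a proof. So there is no in-paper argument to compare your approach against.

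Your reduction is the standard one and is essentially correct. Writing $P_j = \sigma_j/C_n^j$, Newton's inequalities $P_j^2 \ge P_{j-1}P_{j+1}$ make $j\mapsto \log P_j$ concave on $\{0,\dots,k\}$ (all terms are positive since $\lambda\in\Gamma_k$), and the desired inequality is exactly a chord-slope comparison for this concave sequence; your two-step chain through the interval $[s,k]$ is the right way to organize it. One small caution on the equality case: your blanket claim that Newton's inequality is strict whenever $\lambda\neq 0$ and the $\lambda_i$ are not all equal is false in general (e.g.\ $\lambda=(1,0,0)$, $j=2$ gives $P_2^2=P_1P_3=0$). What is true, and what you actually need, is that for $1\le j\le k-1$ and $\lambda\in\Gamma_k$ the three quantities $P_{j-1},P_j,P_{j+1}$ are all positive, and in that regime $P_j^2=P_{j-1}P_{j+1}$ does force $\lambda_1=\cdots=\lambda_n$; this follows from the usual derivative/reciprocal proof of Newton's inequalities together with the fact that a real-rooted polynomial whose derivative has a single repeated root must itself have a single repeated root. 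With that correction, your equality analysis goes through.
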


Let $z=(z_{1},\cdots,z_{n})$ be a point in $ \mathbb{C}^{n}$. Sometimes we also
write $z$ in real coordinates as $z=(t_{1},\cdots,t_{2n})$ and $z_i=t_i+\sqrt{-1}t_{n+i}$. Given
$\xi\in\mathbb{R}^{2n}$, $D_{\xi}u$ denote the directional derivative
of $u$ along $\xi$. For the complex variables, we use the following
notations:
\[
\partial_{k} u =\frac{\partial u}{\partial z_{k}},\; \partial_{\bar{k}} u = \frac{\partial u}{\partial z_{\bar{k}}},\;
\partial_{i\bar{j}} u =\frac{\partial^{2}u}{\partial z_{i}\partial z_{\bar{j}}},\;
\partial_{i\bar{j}k} u =\frac{\partial^{3}u}{\partial z_{i}\partial z_{\bar{j}}\partial z_{k}},\;\cdots
\]
For simplicity, we write $u_{i}=\partial_{i}u$, $u_{i\bar{j}}=\partial_{i\bar{j}}u$,
$u_{i\bar{j}k}=\partial_{i\bar{j}k}u$, and so on.
It holds that 
\begin{equation}
\label{grad-equality}
|\nabla u|^{2}:=\sum_{j=1}^{n} \partial_{j} u \overline{\partial_{j} u}=\frac{1}{4}|D u|^{2}.
\end{equation}
In complex coordinates,
let $\eta_{i\bar{j}} \equiv \Delta u\delta_{ij}-u_{i\bar{j}}$.
With our notations, equation
\eqref{eqn-1} can be written as
\begin{equation}
\label{eqn'}
F(u_{i\bar{j}}) \equiv G(\eta_{i\bar{j}}) \equiv \sigma_{k}^{\frac{1}{k}}(\eta_{i\bar{j}})=\tilde{\psi}(z,u),
\end{equation}
where $\tilde{\psi}=\psi^{1/k}$.
Let $\lambda\equiv\lambda(u_{i\bar{j}})=(\lambda_{1},\cdots,\lambda_{n})$
be the eigenvalues of $\{u_{i\bar{j}}\}$ and $\eta\equiv\lambda(\eta_{i\bar{j}})=(\eta_{1},\cdots,\eta_{n})$
be the eigenvalues of $\{\eta_{i\bar{j}}\}$. Actually, $\eta_{i}=\sum_{j=1}^n\lambda_{j}-\lambda_{i}$.
Equation \eqref{eqn-1} can also be written as
\begin{equation}
f(\lambda) \equiv \sigma_{k}^{ \frac{1}{k} }(\eta)=\tilde{\psi}(z, u).\label{eqn''}
\end{equation}
We also introduce the following notations
\[
F^{i\bar{j}}=\frac{\partial F}{\partial u_{i\bar{j}}},\;\;
F^{i\bar{j},k\bar{l}}=\frac{\partial^{2}F}{\partial u_{i\bar{j}}\partial u_{k\bar{l}}},\;\;
G^{i\bar{j}}=\frac{\partial G}{\partial\eta_{i\bar{j}}},\;\;
G^{i\bar{j},k\bar{l}}=\frac{\partial^{2}G}{\partial\eta_{i\bar{j}}\partial\eta_{k\bar{l}}}.
\]
Recall that at a point where $\{u_{i\bar{j}}\}$ is diagonal, the
following identity holds
\[
F^{i\bar{j}}=f_{i}\delta_{ij},\;\mbox{ where }\;f_{i}=\frac{\partial f}{\partial\lambda_{i}}.
\]
This means $f_1, \cdots, f_n$ are the eigenvalues of $\{F^{i\bar j}\}$.

In the following, we always denote the eigenvalues of $\{u_{i\bar{j}}\}$
by $\lambda=(\lambda_{1},\cdots,\lambda_{n})$ with the ordering $\lambda_{1}\geq\cdots\geq\lambda_{n}$.
Then, $\eta=(\eta_{1},\cdots,\eta_{n}) \in \Gamma_k$ the eigenvalues of $\{\eta_{i\bar{j}}\}$
are ordered as $\eta_{1}\leq\cdots\leq\eta_{n}$, which implies
$\sigma_l (\eta | 1) \geq \cdots \geq \sigma_l (\eta | n)$ for $1\leq l \leq k-1$.
Therefore, at a point $z \in \Omega$
where $\{u_{i\bar{j}} (z) \}$ is diagonal, we have
\[
F^{i\bar{i}}=\sum_{k=1}^{n}G^{k\bar{k}}-G^{i\bar{i}} \; \mbox{and} \;
F^{1\bar{1}}\leq\cdots\leq F^{n\bar{n}}
\]
since
$G^{i\bar i} = \frac{1}{k} \sigma_k^{\frac{1}{k} - 1} \sigma_{k-1} (\eta | i)$
and $G^{1\bar{1}}\geq\cdots\geq G^{n\bar{n}}$.
Furthermore, by the fact that $\lambda_{i}=\frac{1}{n-1}\sum_{j=1}^{n}\eta_{j}-\eta_{i}$
and $\sigma_k^{\frac{1}{k}}$ is homogeneous of degree one,
we have
\begin{equation}
\sum_{i=1}^{n}F^{i\bar{i}}u_{i\bar{i}}=\sum_{i=1}^{n}G^{i\bar{i}}\eta_{i\bar{i}} = \tilde \psi \label{eq:fundamental equality 1}
\end{equation}
and
\begin{equation}
\sum_{i=1}^{n}F^{i\bar{i}}u_{i\bar{i}\xi\xi}=\sum_{i=1}^{n}G^{i\bar{i}}\eta_{i\bar{i}\xi\xi}.\label{eq:fundamental equality 2}
\end{equation}

By Lemma \ref{LT} and Lemma \ref{NewMac}, we can prove the following lemma which is
the key to our estimates.
\begin{lem}
\label{key}
Let $r$ be a $n\times n$ Hermitian matrix,
$\lambda = \lambda(r)$ be the eigenvalues of $r$ and $\eta \in \Gamma_{k}$ where
$\eta_i = \sum_j \lambda_j - \lambda_i$.
Suppose $0\leq f_{1}\leq\cdots\leq f_{n}$ are the eigenvalues
of $\{F^{i\bar j}(r)\}$ for (\ref{eqn''}).
If $1\leq k\leq n-1$, we have
\[
f_{1}\geq c_{n,k}\sum_{i=1}^{n}f_{i}
\]
for some uniform positive constant $c_{n,k}$ only depending on $n$ and $k$.
\end{lem}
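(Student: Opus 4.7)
The plan is to work in a unitary frame at the point $r$ where $\{u_{i\bar j}\}$ (and hence $\{\eta_{i\bar j}\}$) is diagonal, so that $\{F^{i\bar j}\}$ is also diagonal. In that frame the eigenvalues $f_i$ coincide with the diagonal entries $F^{i\bar i}$, and the identities already recorded in the excerpt give
\[
F^{i\bar i}=\sum_{k=1}^{n}G^{k\bar k}-G^{i\bar i},\qquad G^{i\bar i}=\tfrac{1}{k}\sigma_{k}(\eta)^{\frac{1}{k}-1}\sigma_{k-1}(\eta|i).
\]
With the ordering $\lambda_{1}\geq\cdots\geq\lambda_{n}$, i.e.\ $\eta_{1}\leq\cdots\leq\eta_{n}$, we have $\sigma_{k-1}(\eta|1)\geq\cdots\geq\sigma_{k-1}(\eta|n)$, so that $G^{1\bar 1}$ is the largest of the $G^{i\bar i}$ and consequently $f_{1}=F^{1\bar 1}$ is indeed the smallest.

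Summing the identity for $F^{i\bar i}$ gives $\sum_{i}f_{i}=(n-1)\sum_{k}G^{k\bar k}$, so the desired inequality $f_{1}\geq c_{n,k}\sum_{i}f_{i}$ reduces to
\[
\sum_{i\neq 1}G^{i\bar i}\;\geq\;(n-1)\,c_{n,k}\sum_{k}G^{k\bar k},
\]
or equivalently, after absorbing the common factor $\frac{1}{k}\sigma_{k}(\eta)^{1/k-1}$,
\[
\sum_{i\neq 1}\sigma_{k-1}(\eta|i)\;\geq\;(n-1)\,c_{n,k}\sum_{i=1}^{n}\sigma_{k-1}(\eta|i).
\]

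To prove the last inequality I would invoke Lemma \ref{LT} after reversing the ordering of $\eta$. In the decreasing order $\eta_{n}\geq\eta_{n-1}\geq\cdots\geq\eta_{1}$, the $k$-th largest is $\eta_{n-k+1}$, so Lemma \ref{LT} (applicable because $\eta\in\Gamma_{k}$) supplies a dimensional constant $c'_{n,k}>0$ with
\[
\sigma_{k-1}(\eta|n-k+1)\;\geq\;c'_{n,k}\sum_{i=1}^{n}\sigma_{k-1}(\eta|i).
\]
This is the precise point at which the assumption $1\leq k\leq n-1$ is used: it forces $n-k+1\geq 2$, so the ``good'' index picked out by Lemma \ref{LT} is not $i=1$. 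Therefore $\sum_{i\neq 1}\sigma_{k-1}(\eta|i)\geq \sigma_{k-1}(\eta|n-k+1)\geq c'_{n,k}\sum_{i}\sigma_{k-1}(\eta|i)$, and setting $c_{n,k}=c'_{n,k}/(n-1)$ closes the argument.

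The only real obstacle is conceptual rather than computational: one must recognise that, because our ordering of $\eta$ is opposite to that of Lemma \ref{LT}, the term isolated by Lemma \ref{LT} corresponds to the index $n-k+1$, and the restriction $k\leq n-1$ is exactly what keeps that index away from $1$. If $k=n$ the isolated index would be precisely $1$, and the drop $\sum_{i\neq 1}G^{i\bar i}$ could be made arbitrarily small relative to $\sum_{k}G^{k\bar k}$, so one should not expect a uniform $c_{n,n}>0$; this is the sharp range of validity of the lemma.
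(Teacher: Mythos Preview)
Your argument is correct and essentially the same as the paper's proof. Both reduce to the diagonal case, express $f_i$ via $\sigma_{k-1}(\eta|l)$, and invoke Lemma \ref{LT} after reversing the ordering of $\eta$; the paper then picks out $\sigma_{k-1}(\eta|2)$ (using the monotonicity $\sigma_{k-1}(\eta|2)\geq\sigma_{k-1}(\eta|n-k+1)$) while you pick out $\sigma_{k-1}(\eta|n-k+1)$ directly --- a purely cosmetic difference that yields the same constant $c_{n,k}/(n-1)$.
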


\begin{proof}
We only need to prove the lemma when the matrix $r$ is diagonal. Without loss of generality,
we assume $\lambda_{1}(r)\geq\cdots\geq\lambda_{n}(r)$.
Hence, $\eta_{1}\leq\cdots\leq\eta_{n}$.
%since $\{F^{ij} (r)\}$ and $\{r_{ij}\}$ can be diagonalized simultaneously.
By $f(\lambda(r))=\sigma_{k}^{\frac{1}{k}}(\eta)$,
we see
\[
f_{i}=\frac{1}{k}\sigma_{k}^{\frac{1}{k}-1}\sum_{l\neq i}\sigma_{k-1}(\eta|l).
\]
Since $\eta_{1}\leq\cdots\leq\eta_{n}$ and $\eta\in\Gamma_{k}$,
we know
\[
\sigma_{k-1}(\eta|1)\geq\sigma_{k-1}(\eta|2)\geq\cdots\geq\sigma_{k-1}(\eta|n).
\]
By $1\leq k\leq n-1$ and Lemma \ref{LT},  we have $\sigma_{k-1}(\eta|2)\geq c_{n,k}\sum_{i=1}^{n}\sigma_{k-1}(\eta|i)$.
Hence
\[
f_{1}\geq\frac{1}{k}\sigma_{k}^{\frac{1}{k}-1}\sigma_{k-1}(\eta|2)\geq\frac{c_{n,k}}{k}\sigma_{k}^{\frac{1}{k}-1}\sum_{i=1}^{n}\sigma_{k-1}(\eta|i)=\frac{c_{n,k}}{n-1}\sum_{i=1}^{n}f_{i}
\]
where in the last inequality we used $\sum_{i=1}^{n}f_{i}=\frac{n-1}{k}\sigma_{k}^{\frac{1}{k}-1}\sum_{i=1}^{n}\sigma_{k-1}(\eta|i)$.
\end{proof}

For the quotient type equation \eqref{eqn-6}, we can prove a similar result for $k+1$-admissible solutions.
First, we can rewrite equation \eqref{eqn-6} as
\begin{equation}
F(u_{i\bar{j}}) \equiv \Big(\frac{\sigma_{k}}{\sigma_{l}}\Big)^{\frac{1}{k-l}}(\eta_{i\bar{j}})=\tilde{\psi},\;\label{eqn-quotient-1}
\end{equation}
where $1\leq l<k\leq n-1$ and $\tilde{\psi}=\psi^{\frac{1}{k-l}}$.
Using our notations, the above equation can be written equivalently
as
\begin{equation}
f(\lambda)\equiv \Big(\frac{\sigma_{k}}{\sigma_{l}}\Big)^{\frac{1}{k-l}}(\eta)=\tilde \psi,\label{eqn-quotient-2}
\end{equation}
where $\eta\in \Gamma_{k+1}$ and $\eta_{i}=\sum\lambda_{j}-\lambda_{i}$.
%Denote
%\[
%\mathcal{P}_{n-1}=\{\lambda\in\mathbb{R}^{n}:\sum_{k=1}^{n}\lambda_{k}-\lambda_{i}>0,\;\forall1\leq i\leq n\}.
%\]
Similar to Lemma \ref{key}, we have
\begin{lem}
\label{key-2}
Let $r$ be a $n\times n$ Hermitian matrix,
$\lambda=\lambda(r)$ be the eigenvalues of $r$ and $\eta \in \Gamma_{k+1}$ where
$\eta_i = \sum_j \lambda_j - \lambda_i$.
Suppose $0\leq f_{1}\leq\cdots\leq f_{n}$ are the eigenvalues
of $\{F^{i\bar j}(r)\}$ of (\ref{eqn-quotient-2}).
If $1\leq k\leq n-1$, we have
\[
f_{1}\geq c_{n,k,l} \sum_{i=1}^{n}f_{i}
\]
for some uniform positive constant $c_{n,k}$ only depending on $n$, $k$ and $l$.
\end{lem}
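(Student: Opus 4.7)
The plan is to imitate the proof of Lemma \ref{key}, with the main new ingredient being a quotient analog of Lemma \ref{LT}. Since all quantities depend only on the eigenvalues of $r$, I may diagonalize and order the eigenvalues so that $\lambda_1 \geq \cdots \geq \lambda_n$, which gives $\eta_1 \leq \cdots \leq \eta_n$ with $\eta \in \Gamma_{k+1}$. Writing $Q(\eta) = (\sigma_k/\sigma_l)^{1/(k-l)}(\eta)$ and $Q_j = \partial Q/\partial \eta_j$, the chain rule applied to $\eta_j = \sum_p \lambda_p - \lambda_j$ gives $f_i = \sum_{j \neq i} Q_j$.

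Since $Q$ is smooth, symmetric, degree-one homogeneous, and concave on $\Gamma_k$, a standard concavity-plus-symmetry argument (the function $t \mapsto Q(\eta_i + t, \eta_j - t, \ldots)$ is concave and symmetric about the midpoint $(\eta_j - \eta_i)/2$, so its derivative at $t=0$ has sign opposite to that of $\eta_j - \eta_i$) yields $Q_1 \geq Q_2 \geq \cdots \geq Q_n \geq 0$, hence $f_1 \leq \cdots \leq f_n$ with $f_1 = \sum_{j=2}^{n} Q_j$. A direct computation gives $Q_j = \frac{Q}{(k-l)\sigma_k \sigma_l} P_j$ where
\[
P_j := \sigma_l \, \sigma_{k-1}(\eta|j) - \sigma_k \, \sigma_{l-1}(\eta|j) > 0,
\]
the positivity following from Lemma \ref{NewMac} applied to $\eta \in \Gamma_k$. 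The lemma therefore reduces to establishing the quotient analog of Lemma \ref{LT}: there exist $c = c(n,k,l) > 0$ and an index $j_0 \geq 2$ such that
\[
P_{j_0} \geq c \sum_{i=1}^n P_i.
\]
Granted this, we conclude as in Lemma \ref{key}: $f_1 \geq Q_{j_0} \geq c \sum_i Q_i = \frac{c}{n-1} \sum_i f_i$.

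For the key estimate, the strategy is to apply Lemma \ref{LT} separately with parameters $k$ and $l$ (valid because $\Gamma_{k+1} \subset \Gamma_k \cap \Gamma_l$), obtaining $\sigma_{k-1}(\eta|j) \geq c_{n,k} \sum_i \sigma_{k-1}(\eta|i)$ for $j \leq n-k+1$ and the analogous bound for $\sigma_{l-1}$. These alone do not yield the estimate because of potential cancellation in the difference $P_j$; the extra ingredient is the strictly stronger cone hypothesis $\eta \in \Gamma_{k+1}$, which together with Lemma \ref{NewMac} forces the ratio $\sigma_k \sigma_{l-1}(\eta|j)/(\sigma_l \sigma_{k-1}(\eta|j))$ to be uniformly bounded away from $1$, preserving a definite positive fraction of $\sigma_l \sigma_{k-1}(\eta|j)$ inside $P_j$. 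Choosing $j_0 \geq 2$ in the common range (possible since $k \leq n-1$) completes the argument.

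The main obstacle is this last step: controlling the difference structure of $P_j$ rather than its two positive summands separately, and doing so with a single index $j_0 \geq 2$ that works simultaneously for both pieces. The role of the $\Gamma_{k+1}$ hypothesis (as opposed to merely $\Gamma_k$) is essential here, since it alone provides the uniform quantitative gap that prevents the cancellation in $P_j$ from destroying the Lemma \ref{LT}-type lower bound. This is the genuinely new ingredient relative to the proof of Lemma \ref{key}.
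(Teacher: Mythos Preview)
Your proposal is correct and follows essentially the same route as the paper. The paper makes your final step explicit via the identity
\[
P_p \;=\; \sigma_{k-1}(\eta|p)\,\sigma_l(\eta|p)\,(1-\alpha_p),\qquad
\alpha_p:=\frac{\sigma_k(\eta|p)\,\sigma_{l-1}(\eta|p)}{\sigma_{k-1}(\eta|p)\,\sigma_l(\eta|p)},
\]
so that $\eta\in\Gamma_{k+1}$ forces $(\eta|p)\in\Gamma_k$ in $n-1$ variables and Newton--MacLaurin gives $0<\alpha_p\le \frac{l(n-k)}{k(n-l)}<1$ uniformly; Lemma~\ref{LT} is then applied at $j_0=2$ to both $\sigma_{k-1}(\eta|2)$ and $\sigma_l(\eta|2)$ (not $\sigma_{l-1}$), which is exactly the ``uniform gap preventing cancellation'' you describe.
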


\begin{proof}
We only need to prove the lemma when the matrix $r$ is diagonal. We assume $\lambda_{1}(r)\geq\cdots\geq\lambda_{n}(r)$.
Hence, $\eta_{1}\leq\cdots\leq\eta_{n}$. We compute that
\[
f_{i}=\frac{1}{k-l}\Big(\frac{\sigma_{k}}{\sigma_{l}}\Big)^{\frac{1}{k-l}-1}\sum_{p\neq i}\frac{\sigma_{k-1}(\eta|p)\sigma_{l}-\sigma_{k}\sigma_{l-1}(\eta|p)}{\sigma_{l}^{2}}.
\]
Direct calculations show that
\[
\begin{aligned} & \sigma_{k-1}(\eta|p)\sigma_{l}-\sigma_{k}\sigma_{l-1}(\eta|p)\\
= &\; \sigma_{k-1}(\eta|p)\sigma_{l}(\eta|p)-\sigma_{k}(\eta|p)\sigma_{l-1}(\eta|p)\\
= &\; \sigma_{k-1}(\eta|p)\sigma_{l}(\eta|p)(1-\alpha_{p}),
\end{aligned}
\]
where $\alpha_{p}$ is defined as
\[
\alpha_{p}:=\frac{\sigma_{k}(\eta|p)}{\sigma_{k-1}(\eta|p)}\frac{\sigma_{l-1}(\eta|p)}{\sigma_{l}(\eta|p)}.
\]
So we have
\[
f_{i}=\frac{1}{k-l}\Big(\frac{\sigma_{k}}{\sigma_{l}}\Big)^{\frac{1}{k-l}-1}\sum_{p\neq i}\frac{\sigma_{k-1}(\eta|p)\sigma_{l}(\eta|p)(1-\alpha_{p})}{\sigma_{l}^{2}}.
\]
Since $\eta \in \Gamma_{k+1}$, we see $\sigma_k (\eta | p) >0$ for every $1\leq p \leq n$.
By Newton-MacLaurin
inequality (Lemma \ref{NewMac} with $l = s = 0$), we have, for $1\leq p\leq n$,
\begin{equation}
0<\alpha_{p}\leq\frac{l(n-k)}{k(n-l)}<1. \label{eq:number fact}
\end{equation}
%Since $\eta_{1}\leq\cdots\leq\eta_{n}$ and \textcolor[rgb]{1.00,0.00,0.00}{$\eta\in\Gamma_{k+1}$},
%we know for every $1\leq j\leq k+1$
%\[
%\sigma_{j-1}(\eta|1)\geq\sigma_{j-1}(\eta|2)\geq\cdots\geq\sigma_{j-1}(\eta|n).
%\]
Recall that $\eta_1 \leq \cdots \leq \eta_n$.
By Lemma \ref{LT}, for any $q$ such that $1\leq q \leq n-1$ and $\eta \in \Gamma_q$,
we have $\sigma_{q-1}(\eta|2)\geq c_{n,q}\sum_i \sigma_{q-1}(\eta|i)$.
Especially, for $1\leq l<k\leq n-1$ and $\eta \subset \Gamma_{k+1} \subset \Gamma_k \subset \Gamma_{l+1}$, we have
\[
\sigma_{k-1}(\eta|2)\geq c_{n,k}\sum_{i=1}^{n}\sigma_{k-1}(\eta|i)\;\;\mbox{and}\;\;
\sigma_{l}(\eta|2)\geq c_{n,l}\sum_{i=1}^n \sigma_{l}(\eta|i).
\]
Now we can estimate
\[
\begin{aligned}f_{1}\geq & \;\frac{1}{k-l}\Big(\frac{\sigma_{k}}{\sigma_{l}}\Big)^{\frac{1}{k-l}-1}\frac{\sigma_{k-1}(\eta|2)\sigma_{l}(\eta|2)(1-\alpha_{2})}{\sigma_{l}^{2}}\\
\geq & \;\frac{1}{k-l}\Big(\frac{\sigma_{k}}{\sigma_{l}}\Big)^{\frac{1}{k-l}-1}\frac{c_{n,k}\sum_{i=1}^{n}\sigma_{k-1}(\eta|i)c_{n,l}\sum\sigma_{l}(\eta|i)}{\sigma_{l}^{2}}(1-\alpha_{2})\\
\geq & \;\frac{1}{k-l}\Big(\frac{\sigma_{k}}{\sigma_{l}}\Big)^{\frac{1}{k-l}-1}\frac{c_{n,k,l}\sum_{i=1}^{n}\sigma_{k-1}(\eta|i)\sigma_{l}(\eta|i)}{\sigma_{l}^{2}}(1-\alpha_{2})\\
\geq & \;\frac{1}{k-l}\Big(\frac{\sigma_{k}}{\sigma_{l}}\Big)^{\frac{1}{k-l}-1}\frac{c_{n,k,l}\sum_{i=1}^{n}\sigma_{k-1}(\eta|i)\sigma_{l}(\eta|i)(1-\alpha_{i})}{\sigma_{l}^{2}}\\
= & \;c_{n,k,l}\frac{1}{k-l}\Big(\frac{\sigma_{k}}{\sigma_{l}}\Big)^{\frac{1}{k-l}-1}\sum_{i=1}^{n}\frac{\sigma_{k-1}(\eta|i)\sigma_{l}-\sigma_{k}\sigma_{l-1}(\eta|i)}{\sigma_{l}^{2}},
\end{aligned}
\]
where the fourth inequality is due to (\ref{eq:number fact}) and $c_{n,k,l}$ may be different. 

Note that
\[
\begin{aligned}\sum_{i=1}^{n}f_{i}= & \;\frac{1}{k-l}\Big(\frac{\sigma_{k}}{\sigma_{l}}\Big)^{\frac{1}{k-l}-1}\sum_{i=1}^{n}\sum_{p\neq i}\frac{\sigma_{k-1}(\eta|p)\sigma_{l}-\sigma_{k}\sigma_{l-1}(\eta|p)}{\sigma_{l}^{2}}\\
= & \;\frac{1}{k-l}\Big(\frac{\sigma_{k}}{\sigma_{l}}\Big)^{\frac{1}{k-l}-1}(n-1)\sum_{i=1}^{n}\frac{\sigma_{k-1}(\eta|i)\sigma_{l}-\sigma_{k}\sigma_{l-1}(\eta|i)}{\sigma_{l}^{2}}.
\end{aligned}
\]
Therefore, we obtain
$f_{1}\geq\frac{c_{n,k,l}}{n-1}\sum_{i=1}^{n}f_{i}$.
\end{proof}

\begin{rmk}
%We give a remark here.
By Newton-MacLaurin inequality (see Lemma \ref{NewMac} with $l = s = 0$ and $r=k-1$ ), we see that
\begin{equation}
\begin{aligned}
\sum \sigma_{k-1}(\eta|i) = (n-k+1) \sigma_{k-1} (\eta) \geq (n-k+1) C_n^{k-1}  \Big(\frac{\sigma_k}{C_n^k} \Big)^{\frac{k-1}{k}}.
\end{aligned}
\end{equation}
For equation \eqref{eqn'}, we obtain
\begin{equation}
\sum f_i (\lambda) \geq \frac{n-1}{k} (n-k+1) \frac{C_n^{k-1}}{(C_n^k)^{\frac{k-1}{k}}}
\end{equation}
for $\lambda \in \mathbb{R}^n$ such that $\eta\in \Gamma_k$.
Hence, by Lemma \ref{key}, equation \eqref{eqn'} is strictly elliptic.
Similar result can be shown for quotient type equation \eqref{eqn-quotient-2} with $\eta \in \Gamma_{k+1}$ by Lemma \ref{NewMac} and Lemma \ref{key-2}.

\end{rmk}
In the sections below, we will use $\sum_{i=1}^n f_i\ge c_0$ and $f_i\ge c_0\sum_{j=1}^n f_j$ for convenience,
where $c_0$ is a positive constant depending on $n$, $k$ and $l$ if $l$ exists.

\section{Gradient estimates}

In this section, we prove the $C^{1}$ estimates. We always assume
that the conditions in Theorem \ref{thm} hold. We first show the
following interior gradient estimate.
\begin{thm}
\label{thm:interior gradient}
Suppose $u\in C^3(\Omega)$ is a solution to \eqref{eqn-3} or \eqref{eqn-6}. 
Assume $0\in\Omega$ and $B_{r}(0)\subset\Omega$.
Then, we have
\[
|\nabla u|(0)\leq\frac{C}{r},
\]
where $C$ depends on $|u|_{C^{0}}$ and other known data.
\end{thm}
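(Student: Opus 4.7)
The plan is to apply the maximum principle to an auxiliary function of the form
\[
H(z) = \zeta(z)\,|\nabla u|^{2}\,e^{\phi(u)},
\]
where $\zeta(z) = (1-|z|^{2}/r^{2})^{2}$ (or a similar polynomial cutoff) satisfies $\zeta(0)=1$ and vanishes on $\partial B_{r}(0)$, and $\phi$ is a smooth function of one real variable whose second derivative is much larger than $(\phi')^{2}+1$ on the compact interval $[\min u,\max u]$. Since $H\equiv 0$ on $\partial B_{r}(0)$, its maximum on $\overline{B_{r}(0)}$ is attained at an interior point $z_{0}$, at which I may assume $|\nabla u|(z_{0}) > 0$, the alternative case being trivial.

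At $z_{0}$ I rotate the complex coordinates so that $(u_{i\bar j}(z_{0}))$ is diagonal; the linearized operator $L := \sum F^{i\bar j}\partial_{i}\partial_{\bar j}$ then has diagonal symbol $F^{i\bar j} = f_{i}\delta_{ij}$. The first order condition $(\log H)_{i}(z_{0})=0$ gives $(|\nabla u|^{2})_{i}/|\nabla u|^{2} = -(\log\zeta)_{i} - \phi'(u)u_{i}$, and the second order inequality $L(\log H)(z_{0})\leq 0$, after using this relation, rearranges into
\[
0 \geq \frac{L(|\nabla u|^{2})}{|\nabla u|^{2}} + L(\log\zeta) + \phi'(u)\tilde{\psi} + \phi''(u)\sum_{i}f_{i}|u_{i}|^{2} - \sum_{i}f_{i}\bigl|(\log\zeta)_{i}+\phi'(u)u_{i}\bigr|^{2}.
\]
Differentiating the equation once produces $L(u_{k}) = \tilde{\psi}_{k}$, from which a direct expansion yields
\[
L(|\nabla u|^{2}) = 2\operatorname{Re}\langle\nabla u,\nabla\tilde\psi\rangle + \sum_{k,i}f_{i}|u_{ki}|^{2} + \sum_{i}f_{i}\lambda_{i}^{2},
\]
whose last two nonnegative sums I simply discard.

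The decisive step is to invoke Lemma \ref{key}: every eigenvalue satisfies $f_{i}\geq c_{0}\sum_{j}f_{j}$, so the good term admits the pointwise lower bound
\[
\phi''(u)\sum_{i}f_{i}|u_{i}|^{2} \geq c_{0}\,\phi''(u)\,|\nabla u|^{2}\sum_{j}f_{j},
\]
while the competing $\sum_{i}f_{i}|\phi'(u)u_{i}|^{2}\leq (\phi'(u))^{2}|\nabla u|^{2}\sum_{j}f_{j}$ is dominated once $\phi''\gg (\phi')^{2}+1$. The cutoff contributions $L(\log\zeta)$ and $\sum f_{i}|(\log\zeta)_{i}|^{2}$ are controlled by $Cr^{-2}\sum_{j}f_{j}$, and the cross terms $\sum f_{i}(\log\zeta)_{i}\phi'(u)u_{\bar i}$ are absorbed by Cauchy--Schwarz. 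Using the uniform lower bound $\sum_{j}f_{j}\geq c>0$ recorded in the remark following Lemma \ref{key-2}, together with the bound on $|\nabla\tilde\psi|$ coming from the assumptions on $\psi$, the surviving inequality forces $|\nabla u|^{2}(z_{0}) \leq C/r^{2}$. Since $\zeta(0)=1$ and $e^{\phi(u)}$ is uniformly bounded above and below, the inequality $H(0)\leq H(z_{0})$ then delivers $|\nabla u|(0)\leq C/r$.

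The hard part of this plan is precisely the appeal to Lemma \ref{key}: the comparability $f_{i}\geq c_{0}\sum_{j}f_{j}$ for \emph{every} $i$ is what allows the coercive term produced by $\phi''$ to dominate any mode of $\nabla u$ that might lie in a weakly elliptic direction of $(F^{i\bar j})$. This uniform ellipticity is the special feature of the Hessian expression $\Delta u\,\texttt{I}-\partial\bar\partial u$ that makes the argument proceed with no positivity assumption on $\psi$. For the quotient equation \eqref{eqn-6} the argument is identical, with Lemma \ref{key-2} replacing Lemma \ref{key}.
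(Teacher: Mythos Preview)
Your approach mirrors the paper's almost exactly: your $\log H=\log\zeta+\log|\nabla u|^{2}+\phi(u)$ is, up to harmless constants, the paper's test function $G=\log|\nabla u|+h(u)+\log\zeta$ (there $\zeta=r^{2}-|z|^{2}$ and $h(u)=\delta(u+C_{0})^{2}$ with $\delta$ small so that $h''-4(h')^{2}>0$), and the decisive step in both proofs is precisely the appeal to Lemma~\ref{key} (resp.\ Lemma~\ref{key-2}) to convert $\sum_{i}f_{i}|u_{i}|^{2}$ into $c_{0}|\nabla u|^{2}\sum_{j}f_{j}$.

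One slip to correct: the cutoff contributions $L(\log\zeta)$ and $\sum_{i}f_{i}|(\log\zeta)_{i}|^{2}$ are \emph{not} bounded by $Cr^{-2}\sum_{j}f_{j}$ uniformly on $B_{r}$; they blow up like an inverse power of $\zeta(z_{0})$ as $z_{0}\to\partial B_{r}$. Consequently the maximum-principle inequality at $z_{0}$ only yields
\[
\zeta(z_{0})\,|\nabla u|^{2}(z_{0})\le \frac{C}{r^{2}}\qquad\text{(equivalently }H(z_{0})\le C/r^{2}\text{)},
\]
not $|\nabla u|^{2}(z_{0})\le C/r^{2}$. This is exactly how the paper argues as well: it obtains $\zeta^{2}(z_{0})|\nabla u|^{2}(z_{0})\le Cr^{2}$ and then compares values of the test function at $0$ and $z_{0}$. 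Since your final step already passes through $H(0)\le H(z_{0})$ with $\zeta(0)=1$, the corrected bound is precisely what you need and the argument closes once you track the $\zeta$-dependence.
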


\begin{proof}
Consider the following auxiliary function on
$B_{r}(0)\subset\Omega$,
\[
G(z)=\log|\nabla u|+h(u)+\log\zeta(z),
\]
where $\zeta(z)=r^{2}-|z|^{2}$ and $|\nabla u|^2=\sum_{i=1}^nu_ku_{\bar{k}}$. Assume $G$ attains its maximum at
$z_{0}\in B_{r}(0)$. At $z_{0}$, we have
\begin{equation}\label{Gradient- first derivative}
0=G_{i}=\frac{|\nabla u|_{i}^{2}}{2|\nabla u|^{2}}+h'u_{i}+\frac{\zeta_{i}}{\zeta}
\end{equation}
and
\[
\begin{aligned}0\geq & F^{i\bar{j}}G_{i\bar{j}}\\
= &\; F^{i\bar{j}}\frac{|\nabla u|_{i\bar{j}}^{2}}{2|\nabla u|^{2}}-F^{i\bar{j}}\frac{|\nabla u|_{i}^{2}|\nabla u|_{\bar{j}}^{2}}{2|\nabla u|^{4}}\\
& + h' F^{i\bar{j}}u_{i\bar{j}} + h''F^{i\bar{j}}u_{i}u_{\bar{j}}
+F^{i\bar{j}}\frac{\zeta_{i\bar{j}}}{\zeta}-F^{i\bar{j}}\frac{\zeta_{i}\zeta_{\bar{j}}}{\zeta^{2}}.
\end{aligned}
\]

It is immediate to see that
\[
F^{i\bar{j}}\frac{\zeta_{i\bar{j}}}{\zeta}-F^{i\bar{j}}\frac{\zeta_{i}\zeta_{\bar{j}}}{\zeta^{2}}=-\frac{\sum F^{i\bar{i}}}{\zeta}-\frac{F^{i\bar{j}}z_{\bar{i}}z_{j}}{\zeta^{2}}.
\]
By Cauchy-Schwartz inequality and (\ref{Gradient- first derivative}), we obtain
\[
F^{i\bar{j}}\frac{|\nabla u|_{i}^{2}|\nabla u|_{\bar{j}}^{2}}{2|\nabla u|^{4}}\leq4(h')^{2}F^{i\bar{j}}u_{i}u_{\bar{j}}+\frac{4}{\zeta^{2}}F^{i\bar{j}}\zeta_{i}\zeta_{\bar{j}}.
\]
Direct calculation shows
\[
F^{i\bar{j}}|\nabla u|_{i\bar{j}}^{2}
= F^{i\bar{j}}(u_{ki}u_{\bar{k}\bar{j}}+u_{\bar{k}i}u_{k\bar{j}})+u_{k} \tilde{\psi}_{\bar{k}} + u_{\bar{k}} \tilde{\psi}_{k}.
\]
We choose $h=\delta(u+C_{0})^{2}$ where $|u|_{C^{0}}\leq C_{0}-1$
and $\delta>0$. We have
\[
h'=2\delta(u+C_{0})\geq2\delta
\]
and
\[
h''-4(h')^{2}=2\delta-16\delta^{2}(u+C_{0})^{2}\geq\delta
\]
for sufficiently small $\delta$. Recall that $F^{i\bar{j}}u_{i\bar{j}}\geq0$, see \eqref{eq:fundamental equality 1}.
Hence, by Lemma \ref{key}, we have
\[
\begin{aligned}0\geq & - C +\delta F^{i\bar{j}}u_{i}u_{\bar{j}}-\frac{1}{\zeta}\sum_{i=1}^{n}F^{i\bar{i}}-\frac{5r^{2}}{\zeta^{2}}\sum_{i=1}^{n}F^{i\bar{i}}\\
\geq & - C +\delta c_{n,k}|\nabla u|^{2}\sum_{i=1}^{n}F^{i\bar{i}}-\frac{1}{\zeta}\sum_{i=1}^{n}F^{i\bar{i}}-\frac{5r^{2}}{\zeta^{2}}\sum_{i=1}^{n}F^{i\bar{i}}.
\end{aligned}
\]

Assume $|\nabla u(z_{0})|^{2}\geq\frac{C}{\frac{\delta}{2}c_{n,k}c_{0}}$.
We arrive
at
\[
0\geq(\frac{\delta c_{n,k}}{2}|\nabla u|^{2}-\frac{6r^{2}}{\zeta^{2}})\sum_{i=1}^{n}F^{i\bar{i}}
\]
from which we can derive that
\[
\zeta^{2}(z_{0})|\nabla u|^{2}(z_{0})\leq\frac{Cr^{2}}{\delta c_{n,k}}.
\]
Therefore, by $G(0)\leq G(z_{0})$, we obtain
\[
|\nabla u|(0)\leq\frac{C}{r}.
\]
\end{proof}
Now we prove the global gradient estimates by the following theorem.
\begin{thm}\label{global gradient}
Suppose $u$ is a $C^{3}$ $k$-admissible solution to \eqref{eqn-3} or \eqref{eqn-6}.
Then, we have
\[
\sup_{\Omega} |\nabla u|\le C,
\]
where $C$ depends on $|u|_{C^0}$ and other known data.
\end{thm}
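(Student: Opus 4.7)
The plan is to reduce to a boundary-layer estimate by exploiting the interior bound already proved. Fix $\mu > 0$ small enough that $d(z) := \mathrm{dist}(z,\partial\Omega)$ is $C^3$ on the closed strip $\overline{\Omega_\mu} = \{z\in\overline\Omega : d(z) \le \mu\}$. Theorem \ref{thm:interior gradient} gives $|\nabla u|(z) \le C/d(z)$ inside $\Omega$, hence a uniform bound on $\Omega \setminus \Omega_\mu$, so the remaining task is to control $|\nabla u|$ on $\overline{\Omega_\mu}$.

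On $\overline{\Omega_\mu}$ I would use a Ma--Qiu type auxiliary function of the form
$$\Phi(z) := \log\bigl(|\nabla u|^2 + 1\bigr) + \alpha(u+C_0)^2 - A\,d(z),$$
where $C_0 > |u|_{C^0} + 1$, $\alpha>0$ is small, and $A>0$ is large, both to be chosen. Let $z_0 \in \overline{\Omega_\mu}$ be a maximum point of $\Phi$, and argue by cases. If $z_0$ lies in the open strip $\Omega_\mu$, I would compute $F^{i\bar j}\Phi_{i\bar j} \le 0$ at $z_0$ as in the proof of Theorem \ref{thm:interior gradient} (with $-A\,d$ playing the role of $\log\zeta$); the coercive term $\alpha c_0\,|\nabla u|^2 \sum_i F^{i\bar i}$ produced via Lemma \ref{key} then dominates the bounded contributions coming from $\partial\bar\partial d$ and $\nabla\tilde\psi$, giving $|\nabla u|(z_0) \le C$. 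If $z_0 \in \{d = \mu\}$, Theorem \ref{thm:interior gradient} immediately yields $|\nabla u|(z_0) \le C/\mu$.

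The only genuine boundary case is $z_0 \in \partial\Omega$, where the Hopf-type inequality $\Phi_\nu(z_0) \ge 0$ must hold. Since $d_\nu = -1$ on $\partial\Omega$ and $u_\nu = -\beta u + \phi(z)$, this reads
$$\frac{D_\nu |\nabla u|^2}{|\nabla u|^2 + 1} + 2\alpha(u+C_0)\bigl(-\beta u + \phi\bigr) + A \ge 0.$$
Differentiating the Neumann condition tangentially and using the Gauss--Weingarten equations for $\partial\Omega$, one can split $D_\nu |\nabla u|^2$ on $\partial\Omega$ into a coercive piece $-2\beta |\nabla u|^2$ plus a remainder bounded by $C(1+|\nabla u|)$ depending on $\beta, |\phi|_{C^1}$ and the second fundamental form of $\partial\Omega$. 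For $A$ large enough, this forces $|\nabla u|(z_0) \le C$.

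The main obstacle is the boundary case. Without any curvature assumption on $\partial\Omega$, the second fundamental form contributions in the expansion of $D_\nu |\nabla u|^2$ are not sign-definite, so they cannot be absorbed by the boundary geometry as in the strictly pseudoconvex or mean-convex settings. It is precisely the linear damping $-\beta u$ in the Neumann data, combined with the barrier $-A\,d$, that supplies the missing coercivity and replaces the usual (pseudo)convexity of $\partial\Omega$. The delicate point will be to tune $\alpha$ small and $A$ large so that the interior-of-strip Case and the boundary Case close simultaneously, while keeping all thresholds independent of the lower bound of $\tilde\psi$.
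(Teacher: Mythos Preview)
Your boundary case contains a genuine gap. On $\partial\Omega$, splitting $Du$ into its tangential part $D'u$ and normal part $u_\nu\,\nu$ and differentiating the Neumann condition, one actually gets
\[
D_\nu|\nabla u|^2 \;=\; -2\beta\,|D'u|^2 \;-\; 2\,\Pi(D'u,D'u) \;+\; O(|\nabla u|) \;+\; 2(-\beta u+\phi)\,u_{\nu\nu},
\]
with $\Pi$ the second fundamental form. The term $-2\,\Pi(D'u,D'u)$ is \emph{quadratic} in $|\nabla u|$ with no sign control when $\partial\Omega$ carries no curvature hypothesis; it is not $O(1+|\nabla u|)$, and wherever a principal curvature exceeds $\beta$ it swamps the damping $-2\beta|D'u|^2$. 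In addition, $u_{\nu\nu}$ is a second derivative not yet controlled at this stage of the argument. Finally, with your sign choice $-A\,d$ the Hopf inequality $\Phi_\nu\ge 0$ contributes $+A$, so enlarging $A$ can never force a contradiction; the barrier is pointing the wrong way for the boundary case.

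The paper's device is to replace $u$ by $w:=u+\varphi(z,u)\,d$ in the auxiliary function, so that $w_\nu=u_\nu-\varphi=0$ identically on $\partial\Omega$. Then on the boundary $|\nabla w|^2$ is purely tangential and the $w_{\nu\nu}$ contribution to $D_\nu|\nabla w|^2$ drops out; what survives obeys $|D_\nu|\nabla w|^2|\le 2\sup_{\partial\Omega}|\Pi|\cdot|\nabla w|^2$, so the ratio is \emph{bounded} regardless of the sign of $\Pi$. Taking the barrier with the opposite sign, $+A\,d$, the Hopf inequality becomes $0\le C-A$, a contradiction for $A$ large, which forces the maximum off $\partial\Omega$ and into the strip, where an interior-type computation (essentially your Case~1) closes the estimate. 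Note that the linear term $-\beta u$ in the Neumann data plays no role in ruling out the boundary maximum; the whole weight is carried by the substitution $u\mapsto w$.
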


\begin{proof}
Denote $w=u+\varphi(z,u)d$ and $u_\nu=\varphi(z,u)=-\beta u+\varphi(z)$ on $\partial \Omega$.
Consider the following function
\[
G(z)=\log|\nabla w |+g(d)+h(u),
\]
where $d$ is a smooth function and near boundary equals to the distance
function to the boundary, $g= Ad$ for a large positive constant $A$, and $h$ is a smooth function to be determined later.
Suppose that $G$ attains its maximum at $z_{0}\in\bar{\Omega}$,
i.e. $\max_{\bar{\Omega}}G(z) = G(z_{0})$.
We divide the proof into three cases.

Case 0: $z_{0}$ is in $\Omega_{\mu}=\{x | d(z,\partial\Omega)\geq\mu\}$.
We can bound $|\nabla u|(z_{0})$ by Theorem \ref{thm:interior gradient},
i.e. we have $|\nabla u|(z_{0})\leq\max_{\Omega_{\mu}}|\nabla u|\leq C$.

Case 1: $z_{0}$ is on the boundary $\partial\Omega$. Notice that
$w_{\nu}=u_{\nu}+\varphi_{\nu}d+\varphi d_{\nu}=0$ on $\partial \Omega$, where $\nu$ is the unit
outer normal vector. Hence, at $z_0$,
\begin{align*}
0\le\frac{\partial}{\partial \nu} G & =\frac{\frac{1}{4}|D w|_{\nu}^{2}}{|\nabla w|^{2}}+g'd_{\nu}+h'u_{\nu}\\
 & =\frac{  \frac{1}{2}D_kwD_{k\nu}w}{|\nabla w|^{2}}+g'd_{\nu}+h'u_{\nu}\\
 & \le 2\sup_{\partial \Omega}\{|\Pi_{ij}|\}- A +h'\varphi( z_0 ,u),
\end{align*}
which yields a contradiction to the larger choice of $$A=2\sup_{\partial \Omega}\{|\Pi_{ij}|\}+\sup_{\Omega}|h'||\varphi|+1.$$

Case 2: $z_{0}$ is in $\Omega\backslash\Omega_{\mu}$.
Differentiate $G $ at $z_{0}$ once to obtain that
\[
0 = G_{i}(z_{0})=\frac{|\nabla w|_{i}^{2}}{|\nabla w|^{2}}+g'd_{i}+h'u_{i},
\]
and a second time to get that
\begin{align}
0 & \ge F^{i\bar{j}} G_{i\bar{j}}\label{eq:C1 main-1}\\
 & =F^{i\bar{j}}\frac{|\nabla w|_{i\bar{j}}^{2}}{|\nabla w|^{2}}-F^{i\bar{j}}\frac{|\nabla w|_{i}^{2}|\nabla w|_{\bar{j}}^{2}}{|\nabla w|^{4}} + g'F^{i\bar{j}}d_{i\bar{j}}+h'F^{i\bar{j}}u_{i\bar{j}}+h''F^{i\bar{j}}u_{i}u_{\bar{j}}\nonumber \\
 & =F^{i\bar{j}}\frac{|\nabla w|_{i\bar{j}}^{2}}{|\nabla w|^{2}}-F^{i\bar{j}}(g'd_{i}+h'u_{i})(g'd_{\bar{j}}+h'u_{\bar{j}}) + g'F^{i\bar{j}}d_{i\bar{j}}+h'F^{i\bar{j}}u_{i\bar{j}}+h''F^{i\bar{j}}u_{i}u_{\bar{j}}\nonumber \\
 & \ge F^{i\bar{j}}\frac{|\nabla w|_{i\bar{j}}^{2}}{|\nabla w|^{2}}-2(g')^{2}F^{i\bar{j}}d_{i}d_{\bar{j}}+(h''-2(h')^{2})F^{i\bar{j}}u_{i}u_{\bar{j}}+g'F^{i\bar{j}}d_{i\bar{j}}+h'F^{i\bar{j}}u_{i\bar{j}},\nonumber
\end{align}
where in the last inequality we used Cauchy-Schwarz inequality.

Now let us deal with $F^{i\bar{j}}\frac{|\nabla w|_{i\bar{j}}^{2}}{|\nabla w|^{2}}$.
First, we compute that
\begin{align}
F^{i\bar{j}}|\nabla w|_{i\bar{j}}^{2} & =F^{i\bar{j}}(w_{k}w_{\bar{k}i\bar{j}}+w_{ki}w_{\bar{k}\bar{j}}+w_{ki\bar{j}}w_{\bar{k}}+w_{k\bar{j}}w_{\bar{k}i})\label{eq:third derivative-1}\\
 & =w_{k}F^{i\bar{j}}w_{\bar{k}i\bar{j}}+F^{i\bar{j}}w_{ki\bar{j}}w_{\bar{k}}+F^{i\bar{j}}(w_{ki}w_{\bar{k}\bar{j}}+w_{k\bar{j}}w_{\bar{k}i}).\nonumber
\end{align}
Recall that $\varphi(z,u)= - \beta u + \phi(z)$. We have
\begin{align*}
 F^{i\bar{j}}w_{\bar{k}i\bar{j}}
 = &\, F^{i\bar{j}}u_{\bar{k}i\bar{j}}+F^{i\bar{j}}(\varphi d)_{\bar{k}i\bar{j}}\\
 = &\, \tilde{\psi}_{\bar{k}}+F^{i\bar{j}}[(-\beta u+\phi(z))d]_{\bar{k}i\bar{j}}\\
 = &\, F^{i\bar{j}}(-\beta u_{\bar{k}i}d_{\bar{j}}-\beta u_{\bar{k}\bar{j}}d_{i}-\beta u_{i\bar{j}}d_{\bar{k}}
 -\beta u_{\bar{k}}d_{i\bar{j}}-\beta u_{i}d_{\bar{k}\bar{j}}\\
 & -\beta u_{\bar{j}}d_{\bar{k}i}-\beta ud_{\bar{k}i\bar{j}})
  +F^{i\bar{j}}(\phi(x)d)_{\bar{k}i\bar{j}} + \tilde{\psi}_{\bar{k}}(1-\beta d),
\end{align*}
and then by Cauchy-Schwarz inequality we get
\begin{equation}
\label{eq:third derivative 1-1}
\begin{aligned}
 & F^{i\bar{j}}w_{ki\bar{j}}w_{\bar{k}}+w_{k}F^{i\bar{j}}w_{\bar{k}i\bar{j}} \\
\ge &\; (\tilde{\psi}_{\bar{k}}w_{k} + \tilde{\psi}_{k}w_{\bar{k}})(1-\beta d)-\varepsilon F^{i\bar{j}}(u_{ki}u_{\bar{k}\bar{j}}+u_{k\bar{j}}u_{\bar{k}i}) \\
&-C_{\varepsilon}\sum_{i=1}^{n}F^{i\bar{i}}(|\nabla u|^{2}+|\nabla u|).
\end{aligned}
\end{equation}
Direct calculation shows that
\begin{equation}
\label{eq:third derivative 2-1}
\begin{aligned}
 & F^{i\bar{j}}(w_{ki}w_{\bar{k}\bar{j}}+w_{k\bar{j}}w_{\bar{k}i}) \\
 = &\; F^{i\bar{j}}( u(1-\beta d)+\phi(x)d  )_{ki}(u(1-\beta d)+\phi(x)d)_{\bar{k}\bar{j}}\\
  & +  F^{i\bar{j}}(u(1-\beta d)+\phi(x)d)_{\bar{k}i}(u(1-\beta d)+\phi(x)d)_{k\bar{j}} \\
 \ge& (1-\beta d)^{2}F^{i\bar{j}}(u_{ki}u_{\bar{k}\bar{j}}+u_{k\bar{j}}u_{\bar{k}i})(1-\varepsilon)
-C_{\varepsilon}\sum_{i=1}^{n}F^{i\bar{i}}(|\nabla u|^{2}+|\nabla u|+1),
\end{aligned}
\end{equation}
where in the last inequality we used Cauchy-Schwarz inequality. By
(\ref{eq:third derivative-1}), (\ref{eq:third derivative 1-1}) and
(\ref{eq:third derivative 2-1}), for $\mu$ chosen sufficiently small,
we obtain
\[
F^{i\bar{j}}|\nabla w|_{i\bar{j}}^{2}\ge - C |\nabla u|^2 -C\sum_{i=1}^{n}F^{i\bar{i}}(|\nabla u|^{2}+|\nabla u|+1).
\]

From (\ref{eq:C1 main-1}), we have
\begin{equation}
\label{eq:C1 main-2}
\begin{aligned}
0  %\ge &\; F^{i\bar{j}} G_{i\bar{j}}\\
  \ge &\; -C -C\sum_{i=1}^{n}F^{i\bar{i}} -2(g')^{2} F^{i\bar{j}}d_{i}d_{\bar{j}} \\
 & + (h''-2(h')^{2})F^{i\bar{j}}u_{i}u_{\bar{j}}+g'F^{i\bar{j}}d_{i\bar{j}} + h'F^{i\bar{j}}u_{i\bar{j}}.
\end{aligned}
\end{equation}
We choose $h=\delta(u+C_{0})^{2}$ where $|u|_{C^{0}}\leq C_{0}-1$
and $\delta>0$.
We have
\[
h' > 2\delta  \;\; \mbox{and} \;\; h'' - 2 (h')^2 > \delta
\]
for $\delta > 0$ sufficiently small.
By the fact that $F^{i\bar{j}}u_{i}u_{\bar{j}}\ge c_{0}\sum_{i=1}^{n}F^{i\bar{i}}|\nabla u|^{2}$,
$\sum_{i=1}^n F^{i\bar i} \geq c_0$,
 and $F^{i\bar j} u_{i\bar j} \geq 0$, \eqref{eq:C1 main-2} yields that
\[
|\nabla u|^{2}\le C.
\]
\end{proof}

\begin{rmk}
%In this paper
One can show that the global $C^1$ estimate still holds for $k$-admissible solutions to \eqref{eqn-3} with the right hand term $\psi (z,u,Du)$
by the proof with minor changes.
\end{rmk}

\section{Second order estimates}

In this section, we prove the second order estimates. We first reduce
the second order estimates to the double normal derivative on the
boundary by the following theorem.
\begin{thm}
\label{thm:second derivative boundary reduction}Suppose $u$ is a
$C^{4}$ $k$-admissible solution to \eqref{eqn-3} or \eqref{eqn-6}. Then, we have
\[
\sup_{ (z, \xi) \in \Omega\times S^{2n-1}} D_{\xi\xi} u (z) \le C(1+\sup_{\partial\Omega}|D_{\nu\nu}u|),
\]
where $C$ depends on $|u|_{C^1}$ and other known data.
\end{thm}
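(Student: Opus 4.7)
The plan is to argue via the maximum principle applied to the auxiliary function
\[
W(z,\xi) = D_{\xi\xi}u(z) + g(|\nabla u|^{2}) + h(u)
\]
on the compact set $\overline{\Omega}\times S^{2n-1}$, with $g,h$ small increasing functions chosen later to absorb lower-order terms. Let $(z_{0},\xi_{0})$ be a maximum of $W$. By a unitary rotation I assume $\{u_{i\bar{j}}(z_{0})\}$ is diagonal, so $F^{i\bar{j}}(z_{0})=f_{i}\delta_{ij}$ with $0\le f_{1}\le\cdots\le f_{n}$.

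If $z_{0}\in\Omega$, the standard first- and second-order conditions at an interior maximum yield $F^{i\bar{j}}W_{i\bar{j}}(z_{0})\le 0$. Differentiating equation \eqref{eqn'} (or \eqref{eqn-quotient-1}) twice in the real direction $\xi_{0}$ and invoking \eqref{eq:fundamental equality 2} together with the concavity of $F$ on the admissible cone provides a lower bound for $F^{i\bar{j}}(D_{\xi_{0}\xi_{0}}u)_{i\bar{j}}$ modulo third-derivative terms, which are dispatched by Cauchy--Schwarz against the contributions coming from $F^{i\bar{j}}(|\nabla u|^{2})_{i\bar{j}}$. The decisive input is Lemma \ref{key} (respectively Lemma \ref{key-2}), which gives $f_{1}\ge c_{0}\sum_{i} f_{i}$. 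This strict ellipticity upgrades the $h''F^{i\bar{j}}u_{i}u_{\bar{j}}$ term into roughly $c_{0}h''|\nabla u|^{2}\sum_{i} f_{i}$, so for appropriate $h$ all error terms can be absorbed and I conclude $D_{\xi_{0}\xi_{0}}u(z_{0})\le C$.

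If $z_{0}\in\partial\Omega$, decompose $\xi_{0}=a\nu+\tau$ with $\tau$ tangential and $a^{2}+|\tau|^{2}=1$, so
\[
D_{\xi_{0}\xi_{0}}u(z_{0}) = a^{2}D_{\nu\nu}u(z_{0}) + 2a\,D_{\nu\tau}u(z_{0}) + D_{\tau\tau}u(z_{0}).
\]
Differentiating the Neumann condition $u_{\nu}=-\beta u+\phi$ in the tangential direction $\tau$ yields $|D_{\nu\tau}u(z_{0})|\le C(1+|u|_{C^{1}})$. Since $\tau/|\tau|$ is also a competitor direction at $z_{0}$, the maximality of $W$ at $(z_{0},\xi_{0})$ forces $D_{\tau\tau}u(z_{0})\le |\tau|^{2}D_{\xi_{0}\xi_{0}}u(z_{0})$, and combined these give
\[
a^{2}D_{\xi_{0}\xi_{0}}u(z_{0}) \le a^{2}\sup_{\partial\Omega}|D_{\nu\nu}u| + C|a|,
\]
which closes the argument when $|a|$ is bounded away from zero. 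To handle the nearly-tangential subcase $|a|\to 0$, I would incorporate into $W$ a boundary barrier of the form $-Ad(z)$ with $d$ the distance to $\partial\Omega$ and $A\gg 1$, and use the Hopf inequality $\partial_{\nu}W(z_{0})\ge 0$ together with the cross-term estimate above and the strict ellipticity from Lemma \ref{key} to derive a contradiction.

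The main obstacle is precisely the nearly-tangential subcase at the boundary, where the coefficient $a^{2}$ multiplying the only controlled quantity $D_{\nu\nu}u$ degenerates to zero; selecting the correct boundary barrier and matching constants in the Hopf-type argument hinges on the strict ellipticity provided by Lemma \ref{key}. Everything else follows standard maximum-principle technology once that lemma is in hand.
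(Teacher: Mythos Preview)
Your interior argument is essentially correct and close to the paper's: the $|\nabla u|^{2}$ term, once hit by $F^{i\bar j}\partial_{i}\partial_{\bar j}$, produces $F^{i\bar j}(u_{ki}u_{\bar k\bar j}+u_{k\bar j}u_{\bar k i})$, and Lemma~\ref{key} (or \ref{key-2}) converts this into a quadratic contribution $c_{0}(\sum_{i}f_{i})(D_{\xi_{0}\xi_{0}}u)^{2}$ that dominates everything else. (Concavity of $F$ already disposes of the third-order terms, so there is nothing to ``dispatch''; the $h''F^{i\bar j}u_{i}u_{\bar j}$ contribution you emphasize is only $C^{1}$-sized and plays no role.)

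The genuine gap is at the boundary. An \emph{additive} barrier $-Ad$ contributes only the constant $\pm A$ to the Hopf inequality $\partial_{\nu}W(z_{0})\ge 0$, whereas commuting $\partial_{\nu}$ through $D_{\xi_{0}\xi_{0}}$ and using the Neumann condition leaves an uncontrolled term $(C_{11}-\beta)\,D_{\xi_{0}\xi_{0}}u$ with $C_{11}=2\max_{\partial\Omega}|\Pi_{ij}|$; when $C_{11}>\beta$ no fixed constant $A$ can absorb this, and Lemma~\ref{key} is of no help here since the Hopf inequality is first order and $F^{i\bar j}$ never appears. The paper's device is a \emph{multiplicative} weight: it works with $\Phi=e^{-Ar}\bigl(D_{\zeta\zeta}u-v(z,\zeta)\bigr)+|\nabla u|^{2}$, so that $\partial_{\nu}\Phi$ carries the term $-A\,D_{\zeta_{0}\zeta_{0}}u$, and then choosing $A>C_{11}+|\beta|+1$ closes the tangential case without any curvature hypothesis on $\partial\Omega$. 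The subtracted correction $v(z,\zeta)=2\langle\zeta,\nu\rangle\langle\zeta',\,D\phi-\beta Du-D_{l}u\,D\nu^{l}\rangle$ is precisely what removes your $1/|a|$ degeneracy: on $\partial\Omega$ it makes $\Phi$ exactly affine in the splitting $\zeta_{0}=\beta_{1}\tau+\beta_{2}\nu$, yielding $\Phi(z_{0},\zeta_{0})=\beta_{1}^{2}\Phi(z_{0},\tau)+\beta_{2}^{2}\Phi(z_{0},\nu)$ and reducing the non-tangential case directly to $\Phi(z_{0},\nu)$ for every $a\neq 0$.
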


\begin{proof}
Define  $h=e^{-Ar}$, where $r$ is a
 function in $C^{2}(\bar{\Omega})$ with  $r|_{\partial \Omega} = 0$  and $D_\nu r=1$ on $\partial\Omega$,
$A=1+2\max_{\partial\Omega}\{|\Pi_{ij}|\}+|\beta|$ is a constant,
and  $\Pi_{ij}$   is
the second fundamental form of the boundary.
We adopt the following auxiliary function
\[
\Phi(z,\zeta)=h(r)(D_{\zeta\zeta}u-v(z,\zeta))+|\nabla u|^{2}
\]
where $v(x,\zeta) = 2\langle\zeta, \nu\rangle \langle\zeta', D\phi - \beta D u - D_l u D\nu^l\rangle \equiv  a^{l}D_{l}u+b$,  $\zeta'=\zeta-\langle\zeta, \nu\rangle \nu$, $ a^l = - 2 \langle\zeta, \nu\rangle \langle\zeta', D \nu^l\rangle- 2\beta \langle\zeta , \nu\rangle (\zeta')^l$, $b = 2 \langle\zeta, \nu \rangle \langle\zeta', D \phi\rangle$. For any given $\zeta\in S^{2n-1}$,
suppose $\max_{z\in \bar \Omega}\Phi(z,\zeta)$ is attained at $z_0 \in \bar \Omega$.

$\clubsuit$ Case 1: $z_{0}\in\Omega$. We prove that this case
will not happen with proper coefficients. Differentiating $\Phi$
at $z_{0}$, we obtain
\[
0=\Phi_{i}=h'r_{i}(D_{\zeta\zeta}u-v(z,\zeta))+h(r)(D_{\zeta\zeta}u-v(z,\zeta))_{i}+u_{k}u_{\bar{k}i}+u_{\bar{k}}u_{ki},
\]
which yields that
\begin{equation}
(D_{\zeta\zeta}u-v(z,\zeta))_{i}=-\frac{h'r_{i}(D_{\zeta\zeta}u-v(z,\zeta))+u_{k}u_{\bar{k}i}+u_{\bar{k}}u_{ki}}{h}.\label{eq:second reduction estimate -first derivative}
\end{equation}
Differentiating $\Phi$ a second time to get
\begin{align*}
  F^{i\bar{j}}\Phi_{i\bar{j}}
\ge &\; h'F^{i\bar{j}}r_{i\bar{j}}(D_{\zeta\zeta}u-v(z,\zeta))\\
 & +F^{i\bar{j}}h''r_{i}r_{\bar{j}}(D_{\zeta\zeta}u-v(z,\zeta))+h'r_{i}F^{i\bar{j}}(D_{\zeta\zeta}u-v(z,\zeta))_{\bar{j}}\\
 & +F^{i\bar{j}}h'r_{\bar{j}}(D_{\zeta\zeta}u-v(z,\zeta))_{i}+hF^{i\bar{j}}(D_{\zeta\zeta}u-v(z,\zeta))_{i\bar{j}}\\
 & +F^{i\bar{j}}(u_{k\bar{j}}u_{\bar{k}i}+u_{\bar{k}\bar{j}}u_{ki})+F^{i\bar{j}}(u_{k}u_{\bar{k}i\bar{j}}+u_{\bar{k}}u_{ki\bar{j}}),
\end{align*}
and then by (\ref{eq:second reduction estimate -first derivative}),
we obtain

\begin{align*}
  F^{i\bar{j}}\Phi_{i\bar{j}}
\ge &\; 2 h' F^{i\bar{j}} r_{i} \Big(-\frac{h'r_{\bar{j}}(D_{\zeta\zeta}u-v(z,\zeta)) 
+ u_{k}u_{\bar{k}\bar{j}}+u_{\bar{k}}u_{k\bar{j}}}{h} \Big)\\
 %& +F^{i\bar{j}}h'r_{\bar{j}}(-\frac{h'r_{i}(D_{\zeta\zeta}u-v(z,\zeta))+u_{k}u_{\bar{k}i}+u_{\bar{k}}u_{ki}}{h})\\
 & + \big( h'F^{i\bar{j}}r_{i\bar{j}} + F^{i\bar{j}}h''r_{i}r_{\bar{j}} \big) (D_{\zeta\zeta}u-v(z,\zeta))
  + F^{i\bar{j}}(u_{k\bar{j}}u_{\bar{k}i} + u_{\bar{k}\bar{j}}u_{ki}) \\
 & +F^{i\bar{j}}(u_{k}u_{\bar{k}i\bar{j}}+u_{\bar{k}}u_{ki\bar{j}})
  +hF^{i\bar{j}}(D_{\zeta\zeta}u-v(z,\zeta))_{i\bar{j}}\\
\ge &\; F^{i\bar{j}}r_{i}r_{\bar{j}}(D_{\zeta\zeta}u-v(z,\zeta)) \Big( h''-2\frac{(h')^{2}}{h} \Big)
  - 2 \frac{h'}{h} F^{i\bar{j}} r_{i} (u_{k}u_{\bar{k}\bar{j}}+u_{\bar{k}}u_{k\bar{j}})\\
& %- \frac{h'}{h}F^{i\bar{j}}r_{\bar{j}}(u_{k}u_{\bar{k}i}+u_{\bar{k}}u_{ki})
  +h'F^{i\bar{j}}r_{i\bar{j}}(D_{\zeta\zeta}u-v(z,\zeta))
  +F^{i\bar{j}}(u_{k\bar{j}}u_{\bar{k}i}+u_{\bar{k}\bar{j}}u_{ki})
  + u_{k} \tilde{\psi}_{\bar{k}} + u_{\bar{k}} \tilde{\psi}_{k} \\
  & + h \tilde{\psi}_{\zeta\zeta} 
  -h F^{i\bar{j}}(a_{i\bar{j}}^{l}D_{l}u + 2 a_{i}^{l}(D_{l}u)_{\bar{j}}+a^{l}(D_{l}u)_{i\bar{j}}+b_{i\bar{j}}),
\end{align*}
where we used (\ref{eq:fundamental equality 2}), 
$G^{i\bar{j,}k\bar{l}}\eta_{i\bar{j}\xi}\eta_{k\bar{l}\xi}+G^{i\bar{j}}\eta_{i\bar{j}\xi\xi} = \tilde{\psi}_{\xi\xi}$
and $G^{i\bar{j,}k\bar{l}} \leq 0$ in the last inequality. By Cauchy-Schwarz inequality,
we see
\begin{align*}
 & -2 \frac{h'}{h}r_{i}F^{i\bar{j}}(u_{k}u_{\bar{k}\bar{j}}+u_{\bar{k}}u_{k\bar{j}})\\
\ge & -\varepsilon F^{i\bar{j}}(u_{\bar{k}\bar{j}}u_{ki}+u_{k\bar{j}}u_{\bar{k}i}) 
-\frac{2}{\varepsilon}|\nabla u|^{2}\frac{(h')^{2}}{h^{2}}F^{i\bar{j}}r_{i}r_{\bar{j}}.
\end{align*}
Similarly, we have
\[\begin{aligned}
& - hF^{i\bar{j}}(a_{i\bar{j}}^{l}D_{l}u+ 2 a_{i}^{l}(D_{l}u)_{\bar{j}}+a^{l}(D_{l}u)_{i\bar{j}}+b_{i\bar{j}}) \\
\ge & \; -\frac{1}{4}F^{i\bar{j}}(D_{l}u)_{\bar{j}}(D_{l}u)_{i} -C_1(h+h^{2}) \sum F^{i\bar{i}}
- ha^{l}D_{l} \tilde{\psi}.
\end{aligned}\]
Note that $\sum_{l=1}^{2n}D_{l\bar{j}}uD_{li}u=\sum_{p=1}^n2u_{p\bar{j}}u_{\bar{p}i}+2u_{\bar{p}\bar{j}}u_{pi}.$
Above all, we then arrive at
\[\begin{aligned}
F^{i\bar{j}}\Phi_{i\bar{j}}\ge &\; F^{i\bar{j}}r_{i}r_{\bar{j}}(D_{\zeta\zeta}u-v(z,\zeta))(h''-2\frac{(h')^{2}}{h})+h'F^{i\bar{j}}r_{i\bar{j}}(D_{\zeta\zeta}u-v(z,\zeta))\\
 & +(\frac{1}{2} - \varepsilon)F^{i\bar{j}}(u_{k\bar{j}}u_{\bar{k}i}+u_{\bar{k}\bar{j}}u_{ki}) -\frac{2}{\varepsilon}|\nabla u|^{2}\frac{(h')^{2}}{h^{2}}F^{i\bar{j}}r_{i}r_{\bar{j}} + h \tilde{\psi}_u u_{\zeta\zeta} \\
 &  
  - C_3h - C_2-C_1(h+h^{2}) \sum F^{i\bar{i}}.\\
\end{aligned}\]
By Lemma \ref{key} and taking $\varepsilon=1/8$, we obtain that
\begin{align*}
F^{i\bar{j}}\Phi_{i\bar{j}}
\ge & -C_4\times(D_{\zeta\zeta}u-v(z,\zeta)) \sum F^{i\bar{i}}
 - \frac{2}{\varepsilon} |\nabla u|^{2}\frac{(h')^{2}}{h^{2}} \sum F^{i\bar{i}}\\
 & -C_2-C_3h- C_1(h+h^{2}) \sum F^{i\bar{i}}
 + \frac{ c_0}{4} \sum_{j=1}^{n}F^{j\bar{j}}\sum_{k,i}(u_{k\bar{i}}u_{\bar{k}i}+u_{\bar{k}\bar{i}}u_{ki})\\
%\ge & -C\times(D_{\zeta\zeta}u) \sum F^{i\bar{i}}-2|\nabla u|^{2}\frac{(h')^{2}}{h^{2}} \sum F^{i\bar{i}}\\
 %& -C-Ch+ \frac{ c_0 }{4} \sum_{j=1}^{n}F^{j\bar{j}}\sum_{\alpha,\beta=1}^{2n}|D_{\alpha\beta}u|^{2}\\
\ge & -C_{4}D_{\zeta\zeta}u \sum F^{i\bar{i}} - \frac{2}{\varepsilon} |\nabla u|^{2}\frac{(h')^{2}}{h^{2}} \sum F^{i\bar{i}}
-C_5\sum F^{i\bar{i}}\\
 & -C_{2}-C_{3}h - C_1(h+h^{2}) \sum F^{i\bar{i}} +\frac{ c_0 }{4} \sum_{i=1}^{n}F^{i\bar{i}} ( D_{\zeta\zeta} u )^{2},
\end{align*}
which is positive as long as $D_{\zeta\zeta} u$ is large enough.
For example, it is positive when $D_{\zeta\zeta} u>D$. Here,
\[
D:=2\sqrt{\frac{C^2_{4}}{c^2_{0}}+\frac{C_6}{c_0}}+2\frac{C_{4}}{c_0},
\]
where $C_6=16B^2A^2+(C_1+\frac{C_3}{c_0})e^{A\max_{\Omega}|r|}+C_1e^{2A\max_{\Omega}|r|}+C_5+\frac{C_2}{c_0}$;$B=\sup_{\Omega}|\nabla u|$;  $C_3$ depends on $A$, $\max_{\Omega}|r|$ and $\sup_{\Omega}(|\nabla^2r|+|\nabla r|)$; $C_4$ depends on $\sup_{\Omega}|\nabla u|,|\nabla \phi|$ and $\partial \Omega $; $C_5$ depends on $\sup_{\Omega}|\nabla \phi|,
\beta$ and $\partial \Omega$.

Denote $\max_{(z,\zeta)\in \bar \Omega\mathbb{\times S}^{2n-1}}\Phi(z,\zeta)=\Phi(z_{0},\zeta_{0})$.
If $D_{\zeta_0\zeta_0} u(z_0)>D$, by the analysis in Case 1, then we know $z_0\in \partial \Omega$. Otherwise, we have proved this theorem.
%obtain that $F^{i\bar{j}}\Phi_{i\bar{j}}\ge0$ in $\Omega$ and therefore
%$z_{0}\in\partial\Omega.$ Suppose $\max_{\mathbb{\zeta\in S}^{2n-1}}\Phi(z_{0},\zeta)=\Phi(z_{0},\zeta_{0})$,
%where $\zeta_{0}\in\mathbb{S}^{2n-1}$.
Now let us deal with Case
2.

$\clubsuit$ Case 2: $z_{0}\in\partial\Omega.$ We further divide
this case into two subcases according to whether the direction $\zeta_{0}$ is tangential or non-tangential to the boundary.

(a) If $\zeta_{0}$ is non-tangential at $z_{0}\in\partial\Omega$, then
we can write $\zeta_{0}=\beta_{1}\tau+\beta_{2}\nu$, where $\tau\in\mathbb{S}^{2n-1}$
is tangential at $z_{0}$, that is $\langle\tau,\nu\rangle=0$, $\beta_{1}=\langle\zeta_{0},\tau\rangle$,
$\beta_{2}=\langle\zeta_{0},\nu\rangle\ne0$, and $\beta_{1}^{2}+\beta_{2}^{2}=1$.
Then, we have
\begin{align*}
D_{\zeta_{0}\zeta_{0}}u(z_{0})= &\; \beta_{1}^{2}D_{\tau\tau}u(z_{0})+\beta_{2}^{2}D_{\nu\nu}u(z_{0})+2\beta_{1}\beta_{2}D_{\tau\nu}u(z_{0})\\
= &\; \beta_{1}^{2}D_{\tau\tau}u(z_{0})+\beta_{2}^{2}D_{\nu\nu}u(z_{0})\\
 & +2(\xi_{0}\cdot\nu)[\xi_{0}-(\xi_{0}\cdot\nu)\nu] \cdot [D\phi-\beta Du-D_{l}uD\nu^{l}]
\end{align*}
from which we see
\[
\Phi(z_{0},\zeta_{0})= \; \beta_{1}^{2}\Phi(z_{0},\tau)+\beta_{2}^{2}\Phi(z_{0},\nu).
\]
By the definition of $\Phi(z_{0},\zeta_{0})$, we know
\[
\Phi(z_{0},\zeta_{0})\le \; \Phi(z_{0},\nu)\leq C_{7}(1+\max_{\partial\Omega}|D_{\nu\nu}u|).
\]

(b) If $\zeta_{0}$ is tangential at $z_{0}\in\partial\Omega$, then by \eqref{grad-equality} we
have
\begin{align*}
0\leq &\; D_{\nu}\Phi(z_{0},\zeta_{0})\\
= &\; -A(D_{\zeta_{0}\zeta_{0}}u-a^{l}D_{l}u-b)+D_{\nu}D_{\zeta_{0}\zeta_{0}}u\\
 & -D_{\nu}a^{l}D_{l}u-a^{l}D_{\nu}D_{l}u-D_{\nu}b + \frac{1}{2} D_{ k} u D_{\nu} D_{k} u\\
\le &\; -AD_{\zeta_{0}\zeta_{0}}u+D_{\nu}D_{\zeta_{0}\zeta_{0}}u + [ \frac{1}{2} D_{k}u-a^{k}]D_{\nu}D_{k}u+C_{8}.
\end{align*}
By the boundary condition, we know
\begin{align}
D_{\nu}D_{\zeta_{0}\zeta_{0}}u= & \; D_{\zeta_{0}\zeta_{0}}D_{\nu}u-(D_{\zeta_{0}\zeta_{0}}\nu^{k})D_{k}u-2(D_{\zeta_{0}}\nu^{k})D_{\zeta_{0}}D_{k}u\nonumber \\
= &\; D_{\zeta_{0}\zeta_{0}}(-\beta u+ \phi )-(D_{\zeta_{0}\zeta_{0}}\nu^{k})D_{k}u-2(D_{\zeta_{0}}\nu^{k})D_{\zeta_{0}}D_{k}u\nonumber \\
\leq &\; D_{\zeta_{0}\zeta_{0}}(-\beta u)+C_{9}-2(D_{\zeta_{0}}\nu^{k})D_{\zeta_{0}}D_{k}u.
\end{align}
By the same argument as Lemma 4.3 in \cite{LiSY}, we know
\[
D_{\nu}D_{k}u\le C_{10}(1+|D_{\nu\nu}u|)
\]
on $\partial\Omega$. Note that the proof of Lemma 4.3 is only related
to the Neumann boundary condition. Also, by the similar argument in
\cite{MQ}, we see
\[
-2(D_{\zeta_{0}}\nu^{k})D_{\zeta_{0}}D_{k}u\le C_{11}D_{\zeta_{0}\zeta_{0}}u,
\]
where $C_{11}=2\max_{\partial\Omega}\{|\Pi_{ij}|\}$ and $\Pi_{ij}$ is
the second fundamental form of the boundary. Therefore, we have
\[
0\leq( - A + C_{11} - \beta )D_{\zeta_{0}\zeta_{0}}u+C_{12}(1+|D_{\nu\nu}u|)+C_{13}.
\]
Taking $A=1+2\max_{\partial\Omega}\{|\Pi_{ij}|\}+|\beta|>C_{11}-\beta+1$, we get
\[
\Phi (z_{0},\zeta_{0})\leq C_{12}(1+\max_{\partial\Omega}|D_{\nu\nu}u|)+C_{13} +C_{14},
\]
where $C_{14}$ depends on $|u|_{C^1(\Omega)}$, $\max_{\Omega}|r|$ and $A$.
\end{proof}

\begin{rmk}
For general Neumann boundary data $\varphi(z,u)$, Theorem \ref{thm:second derivative boundary reduction} still holds by replacing $v(z,\zeta)$ in the above proof with
$v(z,\zeta) =2\langle\zeta, \nu\rangle \langle\zeta', D\varphi - D_l u D\nu^l\rangle $.
%$=  a^{l}D_{l}u+b$.
%for  $a_l=2\langle\zeta, \nu\rangle  (\zeta'^l\varphi_u-\langle\zeta', D\nu^l\rangle)$.
\end{rmk}

\begin{rmk}
For $F(u_{i\bar{j}})=\tilde{\psi}(z,u,Du)$ with $\tilde{\psi}(z,q,p)\in C^2(\Omega\times \mathbb{R}\times\mathbb{R}^n)$, Theorem \ref{thm:second derivative boundary reduction} still holds.  The main changes are to deal with $h\tilde{\psi}_{\zeta\zeta}$. By choosing $h=e^{-A_1r-A_2}$ for large $A_2$ such that $|h\tilde{\psi}_{p_kp_l}|<\frac{c_0}{8}$ and  (\ref{eq:second reduction estimate -first derivative}), we can also control $h\tilde{\psi}_{\zeta\zeta}$ and obtain Theorem \ref{thm:second derivative boundary reduction}.\end{rmk}

Now we estimate the double normal derivative on the boundary.
\begin{thm}\label{thm:double normal}

Suppose
% $\Omega\subset\mathbb{C}^{n}$ is a $C^{4}$ domain, $\varphi\in C^{3}(\partial\Omega)$,
%\textcolor[rgb]{1,0,1}{$f(z)\in  C^{1}(\overline{\Omega})$}, $f>0$ in $\overline{\Omega}$
 $u\in C^{4}(\Omega)\cap C^{3}(\overline{\Omega})$ is a $k$-admissible solution
to equation (\ref{eqn-3}) or \eqref{eqn-6}. Then, we have
\[
\max_{\partial\Omega}|D_{\nu\nu}u|\leq C ,
\]
where $ C $ depends on $n$, $k$, $\Omega$,
$|u|_{C^{1}}$, $|\psi|_{C^{1}}$ and $|\varphi|_{C^{3}}$.
\end{thm}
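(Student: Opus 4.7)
The plan is to establish upper and lower bounds for $D_{\nu\nu}u$ on $\partial\Omega$ separately, using a barrier construction on the boundary strip $\Omega_\mu = \{z \in \Omega : d(z,\partial\Omega) < \mu\}$ for some small $\mu > 0$. Here $d$ denotes the distance to $\partial\Omega$, extended smoothly to a full neighborhood together with the outer unit normal field $\nu$ and its associated smooth extension. Set
\[
v := u_\nu + \beta u - \phi(z),
\]
so that $v \equiv 0$ on $\partial\Omega$; the upper bound will be derived by showing that $\pm v$ cannot exceed a certain barrier of the form $A_1 d - A_2 d^2$ on $\Omega_\mu$, and by reading off the consequence via the Hopf-type inequality $\partial_\nu(\pm v) \leq 0$ at the boundary maximizer.

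For the upper bound $D_{\nu\nu}u \le C$, I would consider the auxiliary function
\[
\Psi_+(z) := v(z) + A_1 d(z) - A_2 d(z)^2 + A_3 |z - z_0|^2,
\]
where $z_0 \in \partial\Omega$ is a point at which $D_{\nu\nu}u$ is near its boundary maximum, and $A_1,A_2,A_3 > 0$ will be chosen. Computing $F^{i\bar j}\Psi_{+,i\bar j}$: the term $F^{i\bar j} v_{i\bar j}$ is handled by differentiating the equation $F(u_{i\bar j}) = \tilde\psi(z,u)$ in the direction $\nu$ (extended), which expresses $F^{i\bar j}(u_\nu)_{i\bar j}$ in terms of known first-order data plus error terms containing $F^{i\bar j} u_{i\bar k}(\partial_{\bar j}\nu^k)$ and similar; those error terms are controlled by $\sup|\Pi|\cdot(\sum F^{i\bar i})\cdot|u_{i\bar j}|$, while $F^{i\bar j}(d^2)_{i\bar j}$ produces $2 F^{i\bar j} d_i d_{\bar j}$, a strictly positive quantity. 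The key point is that by Lemma \ref{key}, every $F^{i\bar i} \ge c_0 \sum_j F^{j\bar j}$, so one has the crucial lower bound $F^{i\bar j} d_i d_{\bar j} \ge c_0 \sum F^{i\bar i}$ in a frame aligned with $\nabla d$; this lets one choose $A_2$ large (depending only on $\sup |\Pi|$, $\beta$, $|\phi|_{C^2}$, $|u|_{C^1}$ and $c_0$) so that $F^{i\bar j}\Psi_{+,i\bar j} \le 0$ throughout $\Omega_\mu$. Then the maximum principle forces $\Psi_+$ to attain its supremum on the parabolic boundary $\partial\Omega \cup \{d = \mu\}$; on $\{d=\mu\}$ one uses the $C^1$-bound together with Theorem \ref{thm:second derivative boundary reduction}, and on $\partial\Omega$ one uses $v = 0$, which concentrates the max at $z_0$; the resulting inequality $0 \ge \partial_\nu\Psi_+(z_0) = D_\nu v(z_0) + A_1 + (\text{controlled terms})$, combined with $D_\nu v = D_{\nu\nu}u + \beta u_\nu - D_\nu \phi$, yields the sought upper bound. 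Replacing $v$ by $-v$ in the construction gives the matching estimate from below of $v$, i.e.\ of $-D_{\nu\nu}u$ along $\nu$.

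Once the upper bound on $D_{\nu\nu}u$ on $\partial\Omega$ is in hand, Theorem \ref{thm:second derivative boundary reduction} supplies the global upper bound $\sup_{z,\xi} D_{\xi\xi}u \le C$, and in particular every tangential second derivative at any boundary point satisfies $D_{\tau\tau}u \le C$. The lower bound $D_{\nu\nu}u \ge -C$ then follows from the decomposition
\[
D_{\nu\nu}u(z) = \Delta u(z) - \sum_{\alpha=1}^{2n-1} D_{\tau_\alpha \tau_\alpha} u(z),
\]
together with the $k$-admissibility consequence $\sigma_1(\eta) = (n-1)\Delta u > 0$, which gives $\Delta u \ge 0$ and hence $D_{\nu\nu}u \ge -(2n-1) C$.

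The main obstacle, as in Theorem \ref{thm:interior gradient} and Theorem \ref{global gradient}, is the absence of any geometric hypothesis on $\partial\Omega$: the second-fundamental-form error terms that appear when one differentiates $u_\nu$ and then applies $F^{i\bar j}\partial_{i\bar j}$ have the wrong sign in general, and in the classical Ma--Qiu framework they are handled by (pseudo)convexity. Here the strict-ellipticity estimate $F^{i\bar i}\ge c_0\sum_j F^{j\bar j}$ from Lemma \ref{key} plays exactly the role of that geometric assumption, allowing the quadratic barrier $-A_2 d^2$ to absorb all such terms after $A_2$ is taken large. Tracking these constants carefully, together with the extra care needed because $\psi$ is only assumed nonnegative (so one cannot divide by $\tilde\psi$ in any estimate), is the technical heart of the proof.
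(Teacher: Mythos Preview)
Your strategy contains a genuine gap in the barrier step. When you compute $F^{i\bar j}v_{i\bar j}$ with $v=u_\nu+\beta u-\phi$, differentiating $u_\nu = D_l u\,\nu^l$ twice produces the cross terms $F^{i\bar j}(D_l u)_i(\nu^l)_{\bar j}+F^{i\bar j}(D_l u)_{\bar j}(\nu^l)_i$. These involve second derivatives of $u$, and the only control available on them is Theorem~\ref{thm:second derivative boundary reduction}, which gives $|D^2u|\le C(1+M)$ with $M=\sup_{\partial\Omega}|D_{\nu\nu}u|$. Hence the error you must absorb is of order $(1+M)\sum F^{i\bar i}$, not $C\sum F^{i\bar i}$. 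Your barrier term $-A_2d^2$ contributes at best $-2A_2c_0\sum F^{i\bar i}$, so achieving $F^{i\bar j}\Psi_{+,i\bar j}\le 0$ forces $A_2\gtrsim M$, contradicting your claim that $A_2$ depends only on $|u|_{C^1}$ and the data; the argument becomes circular. The same circularity infects your lower bound via the trace identity: $D_{\nu\nu}u\ge-\sum_\alpha D_{\tau_\alpha\tau_\alpha}u$ only yields $D_{\nu\nu}u\ge-(2n-1)C(1+M)$, which closes only if the reduction constant is less than $1/(2n-1)$, and there is no reason for that.

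The paper resolves exactly this difficulty by choosing a different, $M$-dependent barrier \`a la Wang:
\[
\Phi=\langle Du,Dr\rangle-\varphi+M^{-1/2}\bigl(\langle Du,Dr\rangle-\varphi\bigr)^2+\tfrac12 Mr,
\]
with $r$ a defining function. After using the critical-point equation $\Phi_i=0$ to eliminate the first derivatives of the square, the quadratic term produces $\dfrac{M^{3/2}}{2(1+O(M^{-1/2}))^2}F^{i\bar j}r_ir_{\bar j}\ge c\,M^{3/2}\sum F^{i\bar i}$ via Lemma~\ref{key}. This $M^{3/2}$ dominates the $(1+M)\sum F^{i\bar i}$ error for large $M$, forcing the interior maximum to be impossible; the Hopf inequality on $\partial\Omega$ then gives both the upper and lower bounds directly, without any trace argument. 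The essential idea you are missing is this self-improving competition between $M^{3/2}$ and $M$; a fixed-coefficient quadratic barrier of Lions--Trudinger--Urbas type cannot substitute for it here precisely because no curvature sign is assumed on $\partial\Omega$.
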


\begin{proof}
We adopt the idea in \cite{WangJ} for barrier function with minor
changes to suit our needs. Denote $M=\sup_{\partial\Omega}|u_{\nu\nu}|$.
As $\frac{\partial u}{\partial\nu}=\varphi(z,u)$, we construct the
following auxiliary function
\[
\Phi =\; <Du,Dr> - \varphi(z,u) + M^{-\frac{1}{2}}(<Du,Dr>-\varphi(z,u))^{2}+\frac{1}{2}Mr
\]
where $r$ is a defining function such that $r<0$ in $\Omega$, $r=0$ on $\partial \Omega$,
and $\frac{\partial r}{\partial\nu}=1$ on $\partial\Omega$. Define
\[
\Omega_{\mu}:=\{z\in\Omega:d(z,\partial\Omega)<\mu\}.
\]
On $\partial\Omega$, it is obvious that $\Phi=0$.
Take a small positive constant $\mu$ such that  $r=-d$ on $\Omega_{\mu}$,
where $d$ is the distance function to the boundary $\partial\Omega$.
On $\partial\Omega_{\mu}\backslash\partial\Omega$, we see $\Phi<C_{1}-C_{2}M^{-1/2}-\frac{1}{2}M\mu<0$,
when $M$ is large enough. Suppose
\[
\Phi(z_{0})=\max_{ \overline{ \Omega_{\mu} }}\Phi.
\]

If $\Phi$ achieves its maximum in $\Omega_{\mu}$, i.e. $z_0 \in \Omega_\mu$, we then have
\begin{align*}
0= \; \Phi_{i}(z_{0})= &\; [<Du,Dr>-\varphi(z,u)]_{i}\left(1+2M^{-\frac{1}{2}}(<Du,Dr>-\varphi(z,u))\right)\\
 & +\frac{1}{2}Mr_{i},
\end{align*}
and
\begin{align*}
0\ge &\; \Phi_{i\bar{j}}(z_0)\\
= &\; [<Du,Dr>-\varphi(z,u)]_{i\bar{j}}\left((1+2M^{-\frac{1}{2}}(<Du,Dr>-\varphi(z,u))\right)\\
 & +2M^{-\frac{1}{2}}[<Du,Dr>-\varphi(x,u)]_{i} [ <Du,Dr>-\varphi(z,u) ]_{\bar{j}}\\
 & +\frac{1}{2}Mr_{i\bar{j}}\\
= &\; [<Du,Dr>-\varphi(z,u)]_{i\bar{j}}\left((1+2M^{-\frac{1}{2}}(<Du,Dr>-\varphi(z,u))\right)\\
 & +\frac{2M^{3/2}r_{i}r_{\bar{j}}}{4\left(1+2M^{-\frac{1}{2}}(<Du,Dr>-\varphi(z,u))\right)^{2}}
 + \frac{1}{2}Mr_{i\bar{j}}.
\end{align*}

Note that
\begin{align*}
& F^{i\bar{j}}[  <Du,Dr>-\varphi(z,u)]_{i\bar{j}}\\
= &\; F^{i\bar{j}}[(D_{\alpha}u)_{i\bar{j}}D_{\alpha}r+(D_{\alpha}u)_{i}(D_{\alpha}r)_{\bar{j}}+(D_{\alpha}u)_{\bar{j}}(D_{\alpha}r)_{i}+D_{\alpha}u(D_{\alpha}r)_{i\bar{j}}]\\
 & -F^{i\bar{j}}[\varphi_{z_{i}z_{\bar{j}}}+\varphi_{z_{i}u}u_{\bar{j}}+\varphi_{uz_{\bar{j}}}u_{i}+\varphi_{uu}u_{\bar{j}}u_{i}+\varphi_{u}u_{i\bar{j}}]\\
= &\; D_{\alpha} \tilde{\psi} D_{\alpha}r+2F^{i\bar{j}}(D_{\alpha}u)_{i}(D_{\alpha}r)_{\bar{j}}+D_{\alpha}uF^{i\bar{j}}(D_{\alpha}r)_{i\bar{j}}\\
 & -F^{i\bar{j}}[\varphi_{z_{i}z_{\bar{j}}}+\varphi_{z_{i}u}u_{\bar{j}}+\varphi_{uz_{\bar{j}}}u_{i}+\varphi_{uu}u_{\bar{j}}u_{i}+\varphi_{u}u_{i\bar{j}}]\\
\ge &\; -C_{15}(1+M)\sum_{i=1}^{n}F^{i\bar{i}},
\end{align*}
where in the last inequality we used Theorem \ref{thm:second derivative boundary reduction}
and $C_{15}$ is a constant depending on $|u|_{C^1(\Omega)}$, $|\varphi|_{C^2}$ and
$|r|_{C^3(\Omega)}$.
Without loss of generality, we assume that
\[
M \ge 16(\sup_{\Omega}|Du|+\sup_{\Omega\times[\inf_{\Omega}u,\sup_{\Omega}u]}\varphi(z,u))^{2}.
\]
So we have
\[
|M^{-\frac{1}{2}}(<Du,Dr>-\varphi(z,u))|\le1/4.
\]

%\textcolor[rgb]{1,0,1}{At $z_{0}$, without loss of generality, we assume $\{u_{i\bar{j}}\}$
%is diagnoal. Delete this sentence?}
By Theorem \ref{thm:second derivative boundary reduction}
and $|D r|^{2}=1$ on $\Omega_{\mu}$, we obtain
\begin{align*}
0  \ge &\; F^{i\bar{j}}\Phi_{i\bar{j}}\\
  = &\; F^{i\bar{j}}[<Du,Dr>-\varphi(z,u)]_{i\bar{j}}\left(1+2M^{-\frac{1}{2}}(<Du,Dr>-\varphi(z,u))\right)\\
  & +  \frac{2M^{3/2}F^{i\bar{j}}r_{i}r_{\bar{j}}}{4\left(1+2M^{-\frac{1}{2}}(<Du,Dr>-\varphi(z,u))\right)^{2}}
 + \frac{1}{2}M F^{i\bar j} r_{i\bar{j}} \\
  \ge & \; -C_{15}(1+M)\sum_{i=1}^{n}F^{i\bar{i}}+\frac{ 2}{9}c_{n,k}\sum_{i=1}^{n}F^{i\bar{i}}M^{3/2}>0,
\end{align*}
for large $M$ satisfying $\frac{2}{9}c_{n,k}M^{3/2} > C_{15}(1+M)$.
This yields a contradiction. Hence, we can assume the maximum of $\Phi$ is achieved
on $\partial\Omega_{\mu}$ and therefore on $\partial\Omega.$

By Hopf lemma, we have on $\partial\Omega$
\begin{align*}
0 & \le\frac{\partial\Phi}{\partial\nu}\\
 & =(r_{l}D_{\nu}u_{l}+u_{l}D_{\nu}r_{l}-D_{\nu} \varphi ) \Big(1+M^{-\frac{1}{2}}(<Du,Dr> - \varphi ) \Big)+\frac{1}{2}M.
\end{align*}
If $\sup_{\partial\Omega}|u_{\nu\nu}|=-\inf_{\partial\Omega}u_{\nu\nu}=-u_{\nu\nu}(z_{1})=M,$ then
from the above inequality we have
\[ 0\leq  \frac{3}{4} u_{\nu\nu} (z_1) + C_{16} + \frac{1}{2}M \]
which implies that
 $\sup_{\partial\Omega}|u_{\nu\nu}|\le4C_{16}.$

Similarly, we can construct an auxiliary function
\[
\bar{\Phi} = \; <Du,Dr>-\varphi(z,u)-M^{-\frac{1}{2}}(<Du,Dr>-\varphi )^{2}-\frac{1}{2}Mr.
\]
With similar argument, we know $\bar{\Phi}$ achieves its minimum
at $\partial\Omega.$ On $\partial\Omega,$
\begin{align*}
0 & \ge\frac{\partial\bar{\Phi}}{\partial\nu}\\
 & =(r_{l}D_{\nu}u_{l}+u_{l}D_{\nu}r_{l}-D_{\nu} \varphi )\left(1-M^{-\frac{1}{2}}(<Du,Dr>-\varphi ) \right)-\frac{1}{2}M.
\end{align*}
If $\sup_{\partial\Omega}|u_{\nu\nu}|=\sup_{\partial\Omega}u_{\nu\nu}=u_{\nu\nu}(z_{2})=M$, then
from the above inequality we have
\[
0 \geq  \frac{3}{4}u_{\nu\nu} (z_2) -C_{17}-\frac{1}{2}M,
\]
which implies
$\sup_{\partial\Omega}|u_{\nu\nu}|\le 4C_{17}.$
\end{proof}

\begin{rmk}
The double normal estimates on boundary hold for general $\psi (z,u,Du)$ and thus the global $C^2$ estimates hold.
\end{rmk}

\begin{rmk}
The a priori estimates also hold for some general smooth symmetric functions of $n$ variables defined in a symmetric, open and convex cone $\Gamma\subset \mathbb{R}^{n}$ similar to those in \cite{CNS}.
\end{rmk}

\section{Proof of the main theorem}
\begin{proof}[Proof of Theorem \ref{thm}]
Now we can give the proof of Theorem \ref{thm}. The $C^0$ estimate is similar as \cite{CW} and we omit it here.
%\textcolor[rgb]{1,0,1}{Similar to Lions-Trudinger-Urbas \cite{LTU}, we can derive the $C^0$ estimate for admissible solutions of equation %\eqref{eqn-3}(what is the uniform estimate?) }.
Combining our $C^1$, $C^2$ estimates and Evans-Krylov Theorem, we obtain
\[
|u|_{C^{2,\alpha} (\bar{\Omega})} \leq C
\]
for some uniform $C$ independent of the lower bound $\psi$ and $0<\alpha <1$.
Applying the method of continuity (see \cite{GT}, Theorem 17.28), we complete
the proof of Theorem \ref{thm}.
\end{proof}

\begin{proof}[Proof of Theorem \ref{thm2}]
The proof of Theorem 2  is almost the same as Theorem 1.3 in Chen-Wei \cite{CW} and we sketch the proof  here for the completeness. Denote $u_\varepsilon$ as the solution to
 \begin{equation}
\begin{cases}
\sigma_{k}(\Delta u \texttt{I} -\partial\bar{\partial}u)=\psi(z), & \mbox{\text{in}}\;\Omega,\\
u_\nu = - \varepsilon u + \phi(z), & \mbox{on}\;\partial\Omega.
\end{cases}
\end{equation}
The existence of $u_\varepsilon$ holds by Theorem \ref{thm}.
Following the proof of Theorem 1.3 in Chen-Wei \cite{CW} closely, we can obtain $|\varepsilon u_{\varepsilon}|_{C^0(\Omega)}\le C$, $|\nabla u_{\varepsilon }|\le C$, and $|\nabla^2 u_{\varepsilon }|\le C$, where $C$ is a positive constant independent of $\varepsilon$.  Here the convexity condition is to obtain the uniform $C^1$ estimates respect $\varepsilon$. 
Let $$v_{\varepsilon}=u_\varepsilon-\frac{1}{|\Omega|}\int_{\Omega }u_\varepsilon$$ and satisfy

$$\left\{\begin{array}{l}
\frac{\sigma_{n}\left(\Delta  v_{\varepsilon} \texttt{I} -\partial\bar{\partial} v_{\varepsilon}\right)}{\sigma_{l}\left(\Delta  v_{\varepsilon} \texttt{I} -\partial\bar{\partial} v_{\varepsilon}\right)}=\psi(z), \quad \text { in } \quad \Omega, \\
D_{\nu}\left(v_{\varepsilon}\right)=-\varepsilon v_{\varepsilon}-\frac{1}{|\Omega|} \int_{\Omega} \varepsilon u_{\varepsilon}+\phi(z), \quad \text { on } \quad \partial \Omega.
\end{array}\right.$$

By the uniform global $C^1$ estimates, there exist a subsequence $v_{\varepsilon}$ converging to $v$ and a constant $c$ such that 
$$\left\{\begin{array}{l}
\frac{\sigma_{n}\left(\Delta  v \texttt{I} -\partial\bar{\partial} v\right)}{\sigma_{l}\left(\Delta  v \texttt{I} -\partial\bar{\partial} v\right)}=\psi(z), \quad \text { in } \quad \Omega, \\
D_{\nu}\left(v\right)=c+\phi(z), \quad \text { on } \quad \partial \Omega.
\end{array}\right.$$
The uniqueness is obtained by the Hopf lemma and same as \cite{CW}.
\end{proof}

\begin{proof}[Proof of Theorem \ref{Hessian quotient estimates}]
The $C^0$ estimate holds by the standard argument in \cite{CW}. Then Theorem 3 follows by Theorem \ref{global gradient}, Theorem \ref{thm:second derivative boundary reduction} and Theorem \ref{thm:double normal}.
\end{proof}

\begin{proof}[Proof of Theorem \ref{thm-real part}]
The proof is same as Theorem \ref{thm} by deleting the bar.
\end{proof}

\end{document}